\newenvironment{algo}{%
  \algorithm
}{%
  \endalgorithm
}
\newcommand{\ms}{\mbox{\rm \tiny MS}}
\newcommand{\rb}{\mbox{\rm \tiny RB}}
\newcommand{\Vmsk}{V_{H,k}^{\ms}}
\newcommand{\VmsRB}{V_{H,k}^{\rb}}
\newcommand{\WKzRB}{\mathcal{W}_{K,z}^{\rb}}
\newcommand{\WzRB}{W_{z}^{\rb}}
\newcommand{\VzRB}{V_{z}^{\rb}}
\newcommand{\uRB}{u_H^{\rb}}
\newcommand{\pRB}{p_H^{\rb}}
\newcommand{\pRBn}{p_H^{\rb,(n)}}
\newcommand{\pHn}{p_H^{\mbox{\tiny \rm c},(n)}}
\newcommand{\pH}{p_H^{\mbox{\tiny \rm c}}}
\newcommand{\deltaRBnn}{\delta_H^{\rb,(n+1)}}
\newcommand{\eRB}{e_H^{\rb}}
\newcommand{\eH}{e_H^{\mbox{\tiny \rm c}}}
\newcommand{\sym}{ \mbox{\tiny \rm sym} }
\newcommand{\train}{ \mbox{\tiny \rm train} }
\newcommand{\R}{\mathbb{R}}
\newcommand{\T}{\mathcal{T}}
\newcommand{\N}{\mathbb{N}}
\newcommand{\eps}{\varepsilon}
\newcommand{\aeps}{a^\eps}
\newcommand{\ueps}{u^\eps}
\newcommand{\beps}{b^{\eps}}
\newcommand{\bmu}{{\boldsymbol\mu}}
\newcommand{\quotes}[1]{``#1''}
\definecolor{dark-green}{rgb}{0.0,0.4,0.0}
\definecolor{greenish}{rgb}{0.0,0.7,0.3}
\newtheorem{theorem}{Theorem}[section]
\newtheorem{proposition}[theorem]{Proposition}
\theoremstyle{definition}
\newtheorem{definition}[theorem]{Definition}
\newtheorem{remark}[theorem]{Remark}
\newtheorem{notation}[theorem]{Notation} %
\newenvironment{fshaded}{%
\MakeFramed {\FrameRestore}}%
{\endMakeFramed}
\newtheorem{cstep}{Step}
\newenvironment{step}[1][]{\definecolor{shadecolor}{rgb}{.93,.93,1}%
\definecolor{framecolor}{rgb}{.95,.95,1}%
\begin{fshaded}\begin{cstep}[#1]\mbox{}\\\noindent}{\end{cstep}\end{fshaded}\medskip}
\title{A reduced basis localized orthogonal decomposition}
\begin{document}
\maketitle

\begin{center}
{\large Assyr Abdulle\footnote[1]{ANMC, Section de Math\'{e}matiques, \'{E}cole polytechnique f\'{e}d\'{e}rale de Lausanne, 1015 Lausanne, Switzerland, Assyr.Abdulle@epfl.ch} and Patrick Henning\footnote[2]{Institut f\"ur Numerische und Angewandte Mathematik, Westf\"alische Wilhelms-Universit\"at M\"unster, Einsteinstr. 62, D-48149 M\"unster, Germany, Patrick.Henning@wwu.de}}\\[2em]
\end{center}

\renewcommand{\thefootnote}{\fnsymbol{footnote}}
\renewcommand{\thefootnote}{\arabic{footnote}}

\begin{abstract}
In this work we combine the framework of the Reduced Basis method (RB) with the framework of the Localized Orthogonal Decomposition (LOD) in order to solve parametrized elliptic multiscale problems. The idea of the LOD is to split a high dimensional Finite Element space into  a low dimensional space with comparably good approximation properties and a remainder space with negligible information. The low dimensional space is spanned by locally supported basis functions associated with the node of a coarse mesh obtained by solving decoupled local problems. However, for parameter dependent multiscale problems, the local basis has to be computed repeatedly for each choice of the parameter. To overcome this issue, we propose an RB approach to compute in an \quotes{offline} stage LOD for suitable representative parameters. The online solution of the multiscale problems  can then be obtained in a coarse space (thanks to the LOD decomposition) and for an arbitrary value of the parameters (thanks to a suitable \quotes{interpolation} of the selected RB). The online RB-LOD has a basis with local support and leads to sparse systems. Applications of the strategy to both linear and nonlinear problems are given.
\end{abstract}

\paragraph*{Keywords}
finite element, reduced basis, parameter dependent PDE, numerical homogenization, multiscale method

\paragraph*{AMS subject classifications}
65N30, 65M60, 74Q05, 74Q15

\section{Introduction}
\label{section:introduction}

In this paper, we consider parametrized linear elliptic multiscale problems, i.e. we are interested in finding the parameter-dependent solution $\ueps(\cdot\hspace{2pt};\cdot)$ of an equation
\begin{align}
\nonumber\label{equation-strong} - \nabla \cdot \left( \aeps(x;{\bmu}) \nabla \ueps(x;{\bmu}) \right) &= f(x;{\bmu}) \qquad \mbox{in } \Omega, \\
\ueps(x;{\bmu}) &= 0 \hspace{38pt} \mbox{on } \partial \Omega.
\end{align}
Here, ${\bmu}=(\mu_1,\ldots,\mu_P)$ denotes a parameter vector. It is an element of a multidimensional parameter set $\mathcal{D} \subset \R^{P}$, where $P \in \mathbb{N}$. The parameter-dependent coefficient matrix $\aeps(x;{\bmu})$ is assumed to be a {\it multiscale coefficient}. It exhibits a continuum of different scales, where the finest scale is very small compared to the size of computational domain $\Omega$. In particular $\aeps(x;{\bmu})$ shows very fast variations that need to be resolved with an extremely fine computational grid. The order of the fines scale in our problem is characterized by the abstract quantity $0<\eps\ll 1$. However, we do not need to assign a specific value to $\eps$. Due to the requirement that all scales of $\aeps(\cdot;{\bmu})$ need to be resolved with a computational grid, the problem cannot be tackled by standard methods (such as classical finite element methods) since the computational complexity would become prohibitively large.
Hence we are interested in finding a way to decrease the computational complexity and to distribute the load on several CPUs by introducing fully decoupled local subproblems. Furthermore, we want to avoid recomputing local subproblems for every new parameter ${\bmu}$. We are thus looking for (a small number of) representative parameters for which accurate local problems and bases are computed and that allow for fast computations for every new parameter ${\bmu}$.

Parameter-dependent multiscale problems can for instance arise in applications from material sciences, geophysics or hydrology. More specific examples are the prediction of global strain or elasticity properties of fiber reinforced composite materials, where the parameters can describe different constellations for the microscopic fibers that are embedded in the main material (e.g. their form or density). Another example is the flow in porous media where different permeability configurations can be parametrized. For such cases the coefficient $\aeps(\cdot;{\bmu})$ and the source term $f(\cdot;{\bmu})$ can both depend on a large number of parameters ${\bmu}$. It is therefore of strong interest to construct methods that combine the features of a multiscale method (to treat the rapid variations in the coefficients) with a reduced basis approach (to treat the dependency on a large set of parameters).

There are numerous different methods that are designed to treat the classical (parameter-free) multiscale problems (cf. \cite{Abd05b,Abd09a,BaL11,BaL11b,BFH97,EE03,GGS12,Glo06,Glo11,HoW97,HFM98,Hug95,HWC99,MaP14,Mal11,OwZ11,OZB13} and the references therein). In this paper we focus on the localized orthogonal decomposition (LOD) introduced in \cite{MaP14}.
To handle parameter dependency in an efficient way we will build on the  reduced basis (RB) approach (cf. \cite{GNV07,MMO00,PaR07,PRV02,PRV02B,RHP08}). The reduced basis method is a model order reduction technique that we describe at the end of this section when we describe the idea of the reduced basis localized orthogonal decomposition approach (RB-LOD).

Despite the large number of results on multiscale methods and reduced basis approaches there are only few works which combine both features. In the context of
periodic homogenization this was first studied by Boyaval \cite{Boy08,Boy09} and extended for more general numerical homogenization problems in \cite{AbB12,AbB13,AbB14,ABV14}, where the
{\it reduced basis finite element heterogeneous multiscale method} (RB-FE-HMM) has been introduced.
The RB-FE-HMM was originally designed to reduce the computational complexity of the classical Heterogenous Multiscale Method \cite{EE03} by interpreting the location of a cell problem as a parameter (which is equivalent to the dependency on the coarse variable). With that strategy, precomputed solutions from other cell problems can be used to construct reduced basis solution spaces for new cell problems. This method also generalizes to additional parameter dependencies such as in (\ref{equation-strong}). A similar approach which also fits into the HMM framework was presented in \cite{OhS12}, where the focus is on optimization problems that are constrained by a parameterized multiscale problem. A combination of the RB framework with the multiscale finite element method (MsFEM, see \cite{HWC99}) was proposed by Nguyen in \cite{Ngu08}, model reduction techniques for the MsFEM have also been developed in \cite{EGH13}.
Finally, we mention the approach of the localized reduced basis multiscale method (LRBMS) proposed in \cite{KOH11,AHK12} and further developed in \cite{OhS14,KFH14}. The main idea of the method is to localize global solutions (that were determined for a set of parameters) to the elements of a coarse grid. The localization can be simply obtained by truncation and hence the localized solutions can be used as basis functions in a global discontinuous Galerkin approach. 

The Reduced Basis framework can be combined with most of the multiscale methods mentioned in the introduction. In this paper we chose the LOD because it has some attractive features compared to other approaches that also aim to solve multiscale problems without scale separation. For instance, even though an RB Multiscale Finite Element Method (RB-MsFEM, cf. \cite{Ngu08}) is computationally less expensive, it suffers from the the constrained that it requires strong structural assumptions on $\aeps(\cdot,\bmu)$ (such as local periodicity). Efficient and reliable methods that do not suffer from such a constrained are for instance the approaches proposed by Owhadi and Zhang \cite{OwZ11} or Babuska and Lipton \cite{BaL11}. The approach by Owhadi and Zhang exploits a so called transfer property (comparable to a harmonic coordinate transformation) and requires to solve local problems in patches of sizes of order $\sqrt{H} |\log(H)|$ to guarantee an optimal linear convergence rate in $H$ for the $H^1$-error. Compared to that, the LOD only requires patches with a diameter of order $H |\log(H)|$. The method of Babuska and Lipton \cite{BaL11} has a different structure and even smaller patches can be picked. Here optimal local approximation spaces are constructed. However, this local construction requires to incorporate the source term $f(\cdot,\bmu)$ by solving additional local problems of the structure $- \nabla \cdot (\aeps(\cdot,\bmu) \nabla v^{\eps}(\cdot,\bmu)) = f(\cdot,\bmu)$. In order to account for this in the RB-framework, an affine decomposition of $f(\cdot,\bmu)$ is required. Furthermore, the costs for the offline phase are increased. Compared to that, the LOD-approach involves local spaces that are independent of $f$, without suffering from a reduction of the convergence rates.

In this paper we introduce the reduced basis local orthogonal decomposition (RB-LOD). We briefly summarize the main ideas.
Consider a coarse triangulation $\T_H$ and a corresponding set of coarse nodes $\mathcal{N}_H$. For any fixed (i.e. parameter independent) coefficient $\aeps$ the LOD is designed to construct a set of (locally supported) multiscale basis functions $\Phi_z^{\ms}$ (each of them associated with a single coarse node $z\in \mathcal{N}_H$) so that the discrete space that is spanned by these basis functions yields the classical convergence rates in $H$.
The functions $\Phi_z^{\ms}$ are obtained from  the solution of a local finite element problem (in a local space that resolves the microstructure). The coarse triangulation $\T_H$ does not need to resolve the microstructure and can hence be low dimensional. However if the coefficient $\aeps(\cdot;{\bmu})$ is parameter-dependent then $\Phi_z^{\ms}({\bmu})$ is parameter-dependent as well and needs to be recomputed again for any new parameter. To overcome this drawback we apply the reduced basis method together with a Greedy search algorithm to identify a set of parameters for which we compute $\Phi_z^{\ms}$. These solutions can be used to construct affine (reduced basis) spaces $\VzRB$, for each node $z \in \mathcal{N}_H$.

The computation of the spaces $\VzRB$ takes place in an offline phase (i.e. it is a preprocessing step). The functions in $\VzRB$ are only locally supported in a small patch around the node $z$.
Once constructed, these reduced basis (multiscale) spaces can then be used in an online phase to obtain a solution of the problem for any new parameter in a coarse reduced RB-LOD space.
The strategy proposed here allows to construct a localized reduced basis, since different parameter sets can be used locally.
Furthermore, we do not need to assume that there exists an affine decomposition for $f(\cdot;{\bmu})$.

We now introduce the assumptions that we use throughout the paper. The physical domain  $\Omega\subset\mathbb{R}^{d}$, for $d=1,2,3$ will be assumed to be a bounded Lipschitz domain with a piecewise polygonal boundary.

In order to guarantee well-posedness of the problem we assume the following.
\begin{itemize}
\item[(A1)] for every parameter $\bmu\in \mathcal{D}$ we have $f(\cdot\hspace{2pt};\bmu) \in L^2(\Omega)$; furthermore 
we assume 
that there exists $C\in \R$ such that
$\|f\|_{L^2(\Omega,L^{\infty}(\mathcal{D}))} \le C$;
\item[(A2)] the matrix-valued parameter-dependent functions $\aeps(\cdot\hspace{2pt};{\bmu})\in [L^\infty(\Omega)]^{d\times d}_{\sym}$ have uniform spectral bounds, i.e. there exist real numbers $0<\alpha\le\beta$ such that for all $\bmu \in \mathcal{D}$ and almost every $x \in \Omega$
\begin{align*}
\forall \xi \in \R^d: \qquad \alpha |\xi|^2 \le \aeps(x;\bmu) \xi \cdot \xi \le \beta |\xi|^2.
\end{align*}
\end{itemize}
To make sure that the reduced-basis method can be efficiently implemented, we require another assumption.
\begin{itemize}
\item[(A3)] The parameter set $\mathcal{D}$ is compact in $\R^{P}$. Furthermore, there exists a finite index set $\mathcal{Q} \subset \mathbb{N}$, measurable parameter-independent functions $\aeps_q \in L^{\infty}(\Omega)$ (for $q\in \mathcal{Q}$) and measurable functions $\Theta_q : \mathcal{D} \rightarrow \R$ (for $q\in \mathcal{Q}$) such that $\aeps(x;{\bmu})$ has the affine representation
\begin{align*}
\aeps(x;{\bmu}) = \sum_{q \in \mathcal{Q}} \Theta_q({\bmu}) \aeps_q(x). 
\end{align*}
\end{itemize}
We notice that if (A3) does not hold, reduced-basis techniques can still be used by relying on the so-called empirical interpolation method that allows to approximate a tensor $\aeps(x;{\bmu})$ by a decomposition similar to (A3) (cf. \cite{BMN04}). Such techniques could also be used in the present paper but as this is not the main focus of the paper we rather assume the decomposition (A3) to be already at hand.

\begin{remark}[Limitations of the method]
Assumption (A3) gives an insight in the limitations of the RB-LOD that we propose in this paper. If the 
coefficient $a^{\epsilon}(x;\bmu)$ is not smooth with respect to the parameter $\bmu$, an application of an empirical interpolation transformation might not be possible or at least results in a large number of terms in the affine representation 
$\aeps(x;{\bmu}) = \sum_{q \in \mathcal{Q}} \Theta_q({\bmu}) \aeps_q(x).$
However, if $\mathcal{Q}$ is large, the RB-LOD approach might not pay off, since the offline stage gets computationally expensive and requires to store a lot of pre-computed data functions. In particular, the computation of local Riesz representatives (as required by the method, see Step 3 below) becomes very costly. Hence, $\aeps$ should smoothly depend on ${\bmu}$ so that $\mathcal{Q}$ remains small. Similarly, geometry related parameter-dependencies of $a^{\epsilon}(x;\bmu)$ are typically difficult to handle, since they also result in a large number of terms in the affine representation.

On the other hand, if $\aeps(x;{\bmu})$ models a stochastic medium with a smooth dependency on the stochastic variable, the Karhunen-Lo\`{e}ve expansion exhibits an exponential convergence concerning truncation in the number of terms (cf. \cite{ScT06}). Consequently, small values of ${\mathcal{Q}}$ can be expected in this case. Other stochastic applications are elliptic partial differential equations with small uncertainties (cf. \cite{GNP14}). Here, the affine representation of $\aeps(x;\bmu)$ is given as some deterministic $a^0(x)$ plus a small (basically deterministic) perturbation.
\end{remark}

Under assumptions (A1)-(A2), for any given parameter $\bmu \in \mathcal{D}$ there exists a unique weak solution $\ueps(\cdot\hspace{2pt};{\bmu}) \in H^1_0(\Omega)$ of (\ref{equation-strong}) with
\begin{align}
\label{equation-weak}
\left( \aeps(\cdot\hspace{2pt};{\bmu}) \nabla \ueps(\cdot\hspace{2pt};{\bmu}), \nabla v \right)_{L^2(\Omega)} &= \left( f(\cdot\hspace{2pt};\bmu), v \right)_{L^2(\Omega)} \qquad \mbox{for all } v \in H^1_0(\Omega).
\end{align}
For simplicity we use the notation
\begin{align}
\label{definition:problem:bilinearform}\beps(v,w;{\bmu}):=\left( \aeps(\cdot\hspace{2pt};{\bmu}) \nabla v, \nabla w \right)_{L^2(\Omega)}  \qquad \mbox{for } v,w \in H^1_0(\Omega)
\end{align}
and, furthermore, for every subdomain $\omega \subset \Omega$ we denote the local energy norm by
\begin{align*}
\| v \|_{\mathcal{E}(\omega)}^{{\bmu}} := \| \aeps(\cdot\hspace{2pt};{\bmu})^{1/2} \nabla v \|_{L^2(\omega)} \qquad \mbox{for } v \in H^1(\omega).
\end{align*}

{The paper is organized as follows: In Section \ref{subsection-LOD} we recall the definition and the main features of the classical Localized Orthogonal Decomposition. In Section \ref{section-rb-decomposition} we combine this approach with the Reduced Basis method. In particular we present algorithms which give a step-by-step procedure for how to implement the combined method. In Section \ref{section-numerical-experiments} we state the results of two numerical experiments. The first experiment involves a parametrized linear problem, whereas the second experiment demonstrates the applicability of the method to nonlinear problems.}

\section{Localized Orthogonal Decomposition}
\label{subsection-LOD}

We start by introducing a general space discretization of the problem \eqref{equation-strong}, that is based on the framework of the Localized Orthogonal Decomposition (LOD, cf. \cite{MaP14b,MaP14,HeM14,HeP13,HMP15,HMP14b}). Galerkin approximations in standard finite element spaces are known to suffer from pre-asymptotic effects for multiscale problems. More precisely, if $u_H$ denotes the FEM Galerkin approximation in a P1 FEM space $V_H$, the optimal convergence order (under the assumption of sufficient regularity) will be $\mathcal{O}(C(\eps^{-1}) H)$, where $C(\eps^{-1})$ depends on the speed of the oscillations (comparable to the size of the derivative of $\aeps$ - if it exists, e.g. $C(\eps^{-1})={\cal O}(\eps^{-1})$ in periodic homogenization). Hence, $H$ must be smaller than $C(\eps^{-1})$ so that the method enters the classical asymptotic regime of linear convergence in $H$. The idea of the LOD method is to construct subspaces of $H^1_0(\Omega)$, in which we obtain convergence without pre-asymptotic effects, i.e. the convergence rates do not depend on $C(\eps^{-1})$. Consequently, low dimensional subspaces can be used to obtain highly accurate results. Below we describe the construction of the LOD spaces that we denote by $\Vmsk$.

To discretize the problem, we require two families of computational grids on $\Omega$: a family of fine meshes $\T_h$ and a family of of coarse mesh $\T_H$. Both families are assumed to consist of  conforming and shape regular simplicial elements and we denote by $H$ or $h$ the maximum diameter of an element of $\T_H$ or $\T_h$, respectively.
We furthermore assume that  $\T_h$ originates from a regular mesh refinement of $\T_H$. The size of the macro mesh $\T_H$ is not constrained by the microstructure of the solution
and linear convergence towards  $\ueps(\cdot;{\bmu})$  will be obtained in the macro mesh size $H$.
The mesh $\T_h$ is assumed to be fine enough to resolve the rapid variations of the coefficients $\aeps_q$ for $q\in \mathcal{Q}$. For $\T=\T_H,\T_h$ and $\omega$ a subset of coarse or fine elements we denote by $P_1(\omega,\T)$ the space of continuous functions on $\omega$ that are linear in each $K\in\T$.

We then define $V_H:=P_1(\Omega,\T_H)\cap H^1_0(\Omega)$ and the space $V_h$ is defined accordingly. 
The set of interior coarse nodes (interior Lagrange points) of $\T_H$ will be denoted by $\mathcal{N}_H$.
Furthermore, we denote $N:=|\mathcal{N}_H|$ the number of nodes. For a given node $z \in \mathcal{N}_H$ the corresponding coarse nodal basis function is $\Phi_z \in V_H$ (i.e. $\Phi_z(z)=1$ and $\Phi_z(y)=0$ for all $y \in \mathcal{N}_H \setminus \{ z\}$). We set $\omega_z:=\text{supp}(\Phi_z)$ and by $N_z$ we denote the number of coarse elements in $\omega_z$, i.e. $N_z := |\{ K \in \T_H| \hspace{2pt} K \subset \omega_z\}|$.

In the next step, we define a \quotes{remainder space} or \quotes{detailed space} $W_h$ that contains functions with a small $L^2$-norm. To define this space we make use of the following Cl\'ement-type quasi-interpolation operator $I_H$ that was introduced in \cite{Car99}. We define
\begin{align}
\label{def-weighted-clement} I_H : H^1_0(\Omega) \rightarrow V_H,\quad v\mapsto I_H(v):= 
\sum_{z \in \mathcal{N}_{H}}
v_z \Phi_z \quad \text{with }v_z := \frac{(v,\Phi_z)_{L^2(\Omega)}}{(1,\Phi_z)_{L^2(\Omega)}}
\end{align}
and set $W_h := \{ v_h \in V_h | \hspace{2pt} I_H(v_h) = 0 \}$.

\begin{remark}\label{remark-on-L2-projection}
The operator $I_H$ is closely related to the $L^2$-projection $P_{L^2} : V_h \rightarrow V_H$. In particular it holds
$$(I_H\vert_{V_H})^{-1} \circ I_H = P_{L^2}.$$
For details, we refer to \cite[Remark 3.8]{EGH15} (see also \cite{MaP14b,Car99,MaP14}).
\end{remark}

As was shown in \cite{MaP14} the $\beps(\cdot,\cdot;{\bmu})$-orthogonal complement of $W_h$ in $V_h$ has  good $H^1$-approximation properties with respect to the exact solution $\ueps(\cdot\hspace{2pt};\bmu)$. Practically, it is very expensive to compute this orthogonal complement exactly, however it can be accurately approximated by the following cheap localization strategy based on an affine decomposition of the $\beps(\cdot,\cdot;{\bmu})$-orthogonal projection operator from $V_h$ in $W_h$.

\begin{figure}[htb]
  \centering
  \begin{subfigure}{.47\textwidth}
    \centering
    \includegraphics[width=\textwidth]{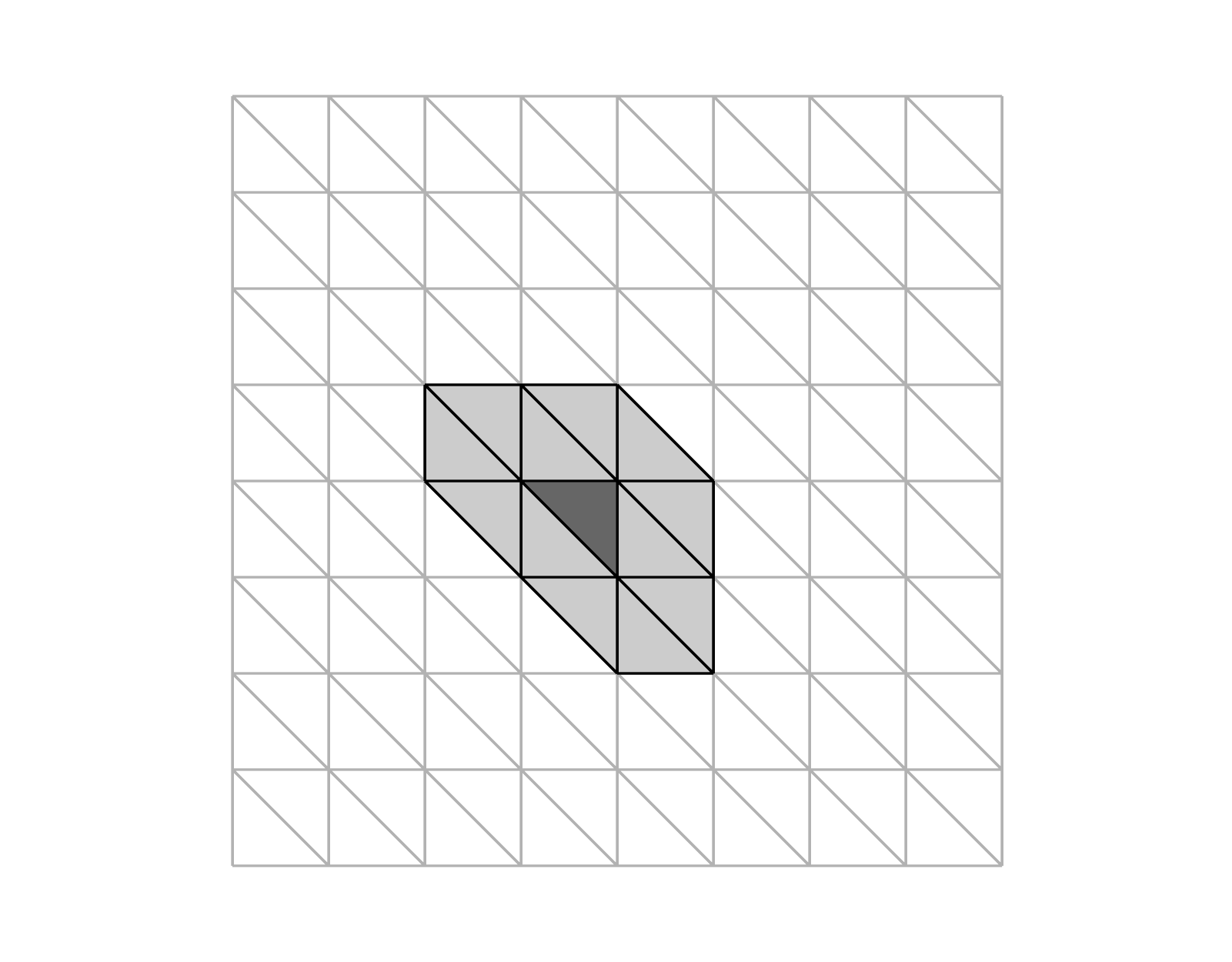}
    \caption{The patch $U_1(K)$ for a given $K\in \T_H$.}
  \end{subfigure}
  \hspace{1em}
  \begin{subfigure}{.47\textwidth}
    \centering
    \includegraphics[width=\textwidth]{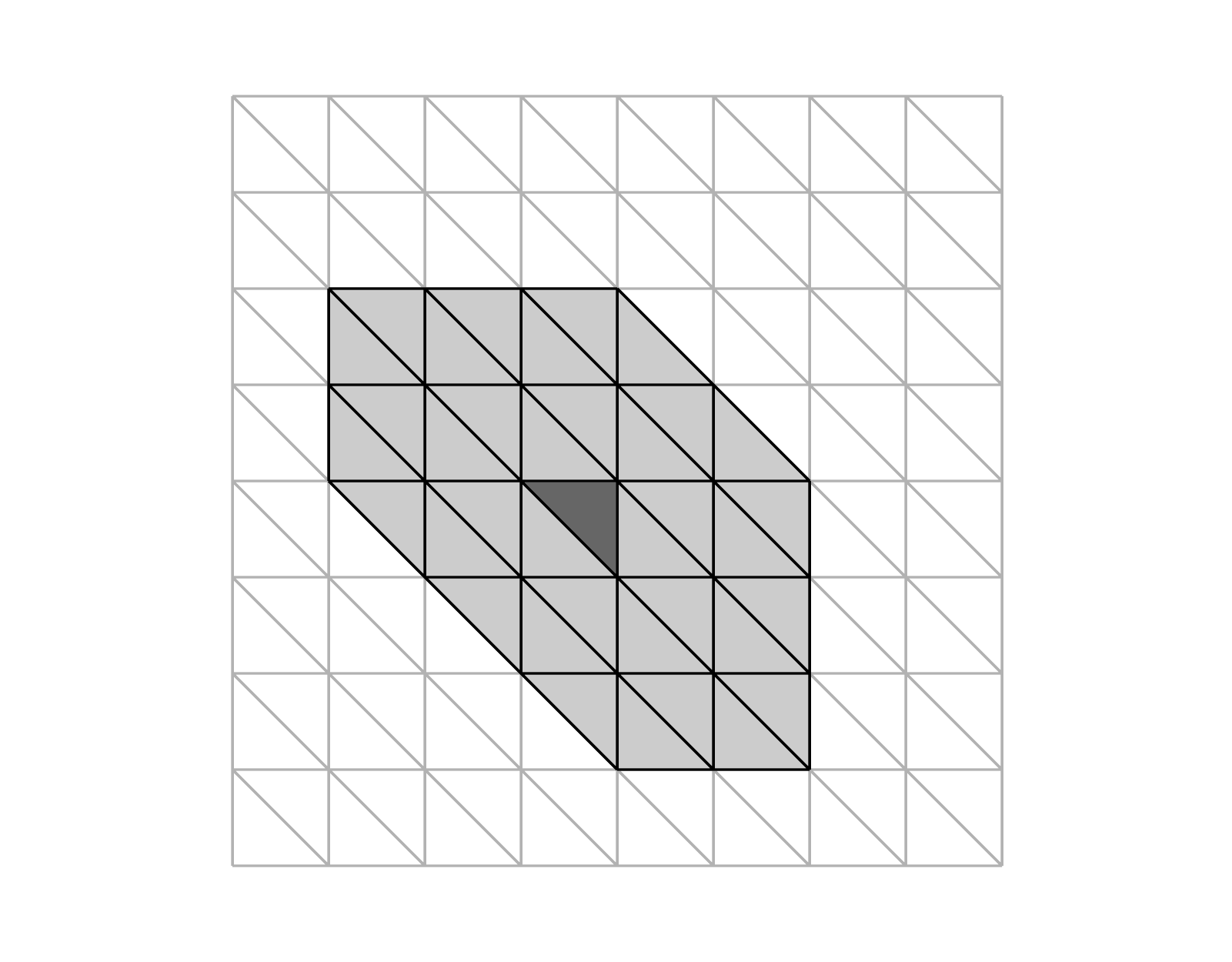}
    \caption{The patch $U_2(K)$ for a given $K\in \T_H$.}
  \end{subfigure}
  \caption{Illustration of the 
  patches $U_k(K)$ defined in \eqref{def-patch-U-k}. The dark gray part depicts the coarse element $K\in\T_H$. The light and the dark gray part together depict the patch $U_k(K)$.}
  \label{fig:patch}
\end{figure}

\begin{definition}[Localized orthogonal complement]
\label{definition:localized:ms:space}
Let us fix $\bmu \in \mathcal{D}$. First, for $k\in \mathbb{N}$ and $K \in \T_H$ we define the patch $U_k(K)$ iteratively by
\begin{equation}\label{def-patch-U-k}
    \begin{aligned}
      U_0(K) & := K, \quad \mbox{and} \quad
      U_k(K) & := \cup\{T\in \T_H\;\vert\; \overline{T}\cap \overline{U_{k-1}(K)}\neq\emptyset\}\quad k=1,2,\ldots
    \end{aligned}
\end{equation}
See Figure \ref{fig:patch} for an illustration.
Then, define the localized remainder space by
\begin{align}
\label{local-kernel-space}
W_h(U_k(K)):=\{ w_h \in W_h| \hspace{2pt} w_h=0 \enspace \mbox{in } \Omega \setminus U_k(K) \}.
\end{align}
Consider a nodal basis function $\Phi_z$. For $k\in \mathbb{N}_{>0},$ $K \in \T_H$ with $K \subset \omega_z$, we 
define $Q_{h,k}^{K}(\Phi_z;{\bmu}) \in {W}_h(U_k(K))$ as the solution of
\begin{align}
\label{local-corrector-problem}\int_{U_k(K)} \aeps(\cdot \hspace{2pt}; {\bmu} ) \nabla Q_{h,k}^{K}(\Phi_z;{\bmu})\cdot \nabla w_h = - \int_K \aeps(\cdot \hspace{2pt}; {\bmu} ) \nabla \Phi_z \cdot \nabla w_h \qquad \mbox{for all } w_h \in W_h(U_k(K)).
\end{align}
Then define $Q_{h,k}(\cdot \hspace{2pt};{\bmu}) : V_H \rightarrow W_h$ by
\begin{align}
\label{global-corrector}Q_{h,k}(\Phi_z;{\bmu}):=\underset{K \subset \omega_z}{\sum_{K\in \T_H}}
Q_{h,k}^{K}(\Phi_z;{\bmu}).
\end{align}
and set
\begin{align}
\label{localized-ms-space}
\Vmsk({\bmu}):=\mbox{\rm span} \{ \Phi_z + Q_{h,k}(\Phi_z;{\bmu})| z \in \mathcal{N}_H \hspace{2pt} \}
\end{align}
to be the approximation of the $\beps(\cdot,\cdot;{\bmu})$-orthogonal complement of $W_h$ in $V_h$. Observe that we get the exact orthogonal complement for the case that $U_k(K)=\Omega$ for all $K\in \T_H$.
\end{definition}

Observe that problem (\ref{local-corrector-problem}) is a constrained problem since solution and test functions need to be in the kernel of the interpolation operator $I_H$. This is practically realized by introducing a corresponding Lagrange multiplier and problem (\ref{local-corrector-problem}) becomes hence a saddle point problem
of the structure: find $(\vec{\mathbf{w}},\vec{\boldsymbol{\lambda}}) \in \mathbb{R}^{N_{h,U_k(K)}} \times \mathbb{R}^{N_{H,U_k(K)}}$ with
\begin{align*}
\mathbf{S}_h \vec{\mathbf{w}} + \mathbf{C}_h^{\top} \vec{\boldsymbol{\lambda}} &= \vec{\mathbf{r}}\\
\mathbf{C}_h \vec{\mathbf{w}} &= 0.
\end{align*} 
Here, $N_{h,U_k(K)}$ denotes the number of fine grid nodes in $U_k(K)$ and $N_{H,U_k(K)}$ the number of coarse grid nodes in $U_k(K)$. The solution vector $\vec{\mathbf{w}}$ is the coefficient vector that defines $Q_{h,k}^{K}(\Phi_z;{\bmu})$ and $\vec{\boldsymbol{\lambda}}$ is the corresponding Lagrange multipler for the constraint $I_H(Q_{h,k}^{K}(\Phi_z;{\bmu}))=0$. The matrix $\mathbf{S}_h$ denotes the standard P1-FEM stiffness matrix associated with the left hand side of (\ref{local-corrector-problem}), $\mathbf{C}_h$ is the localized algebraic version of $I_H$ and $\vec{\mathbf{r}}$ is the load vector associated with the right hand side of (\ref{local-corrector-problem}). Since $N_{H,U_k(K)}$ is a small number, it is not necessary to solve the system with an iterative solver. Instead it can be directly solved by computing the Schur complement and inverting it. The costs for solving all local problems associated with a patch $U_k(K)$ are hence of order $\mathcal{O}(N_{H,U(K)} N_{h,U(K)})$ (provided that algebraic solvers with a linear complexity are used). If $\mathcal{T}_H$ and $\mathcal{T}_h$ are quasi-uniform and if $k\simeq |\log(H)|$ (as suggested by Proposition \ref{prop-conv-lod} below)
we have $N_{H,U(K)}\simeq  |\log(H)|^d$ and $N_{h,U(K)}\simeq (H |\log(H)| / h)^d$. Hence, the total cost for solving all local problems associated with a patch are of order $(H |\log(H)|^2 / h)^d$.

Also note that the assembly of $\Vmsk({\bmu})$ is parallelizable and cheap for small values of $k \in \N$ (then $U_k(K)$ is a small subdomain with a diameter of order $H$). In numerical experiments (cf. \cite{HeM14,HeP13,HMP15,HMP14}) it was demonstrated that $k=1,2,3$ is typically sufficient. The reason is the exponential decay of $Q_{h,k}^{K}(v_H;{\bmu})$ outside of $K$. A quantification of this statement is given in Proposition \ref{prop-conv-lod} below.

In the following, we use the notation $a \lesssim b$, which stands for $a\leq Cb$, where $C$ is a constant that does not depend on the parameter $\bmu$, $k$, the mesh sizes $H$ and $h$ or the rapid oscillations in $\aeps$ (i.e. $\eps$).

The proof of the following proposition follows the line of \cite{HeM14}.
\begin{proposition}
\label{prop-conv-lod}
Assume that (A1)-(A2) hold and let $\Vmsk({\bmu})$ be given by (\ref{localized-ms-space}). We consider 
the following problem: find $u^{\ms}(\cdot\hspace{2pt};\bmu) \in \Vmsk({\bmu})$ such that
\begin{align}
\label{classical-LOD-solution-eq}\beps( u^{\ms}(\cdot\hspace{2pt};{\bmu}), v; {\bmu}) &= \left( f(\cdot\hspace{2pt};\bmu), v \right)_{L^2(\Omega)} \qquad \mbox{for all } v \in \Vmsk({\bmu}).
\end{align}
Then, there exists a generic constant $C_g$ (i.e. independent of $H$, $h$ and $\eps$, {but possibly depending on the contrast $\beta/\alpha$}) such that if $k \ge C_{g} |\log(H)|$ it holds
\begin{align*}
\| u^{\ms}(\cdot\hspace{2pt};{\bmu}) - u_h(\cdot\hspace{2pt};{\bmu}) \|_{H^1(\Omega)} \lesssim H.
\end{align*}
Here, the fine scale reference $u_h(\cdot\hspace{2pt};\bmu) \in V_h$ is defined as the solution of 
\begin{align}
\label{reference-solution}\beps( u_h(\cdot\hspace{2pt};{\bmu}), v_h; {\bmu}) &= \left( f(\cdot\hspace{2pt};\bmu), v_h \right)_{L^2(\Omega)} \qquad \mbox{for all } v \in V_h.
\end{align}
In numerical experiments it can be observed that $C_{g}$ can be typically replaced by $1$ to find a suitable value for $k$ (cf. \cite{HeM14}).
\end{proposition}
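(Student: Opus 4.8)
The plan is to follow the by-now standard LOD error analysis (as in \cite{MaP14,HeM14}): split the error into an \emph{idealized} (non-localized) contribution and a \emph{localization} contribution. For fixed $\bmu$, introduce the global corrector $Q_h:V_H\to W_h$ defined by $\beps(Q_h(v_H),w_h;\bmu)=-\beps(v_H,w_h;\bmu)$ for all $w_h\in W_h$, and the idealized space $V_H^{\ms,\infty}(\bmu):=(\mathrm{id}+Q_h)(V_H)$. A short computation shows $V_H^{\ms,\infty}(\bmu)$ is precisely the $\beps(\cdot,\cdot;\bmu)$-orthogonal complement of $W_h$ in $V_h$, so that $V_h=V_H^{\ms,\infty}(\bmu)\oplus W_h$ is a $\beps$-orthogonal splitting (this is exactly the case $U_k(K)=\Omega$ of Definition \ref{definition:localized:ms:space}). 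The one quantitative fact about $W_h$ I would record is the Poincar\'e-type bound $\|w_h\|_{L^2(\Omega)}\lesssim H\,\|\nabla w_h\|_{L^2(\Omega)}$ for $w_h\in W_h$, which follows from the local approximation and $H^1$-stability properties of the Cl\'ement interpolation $I_H$ together with $I_Hw_h=0$.

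Next I would treat the idealized estimate. Let $u^{\ms,\infty}(\cdot;\bmu)\in V_H^{\ms,\infty}(\bmu)$ solve \eqref{classical-LOD-solution-eq} with $\Vmsk(\bmu)$ replaced by $V_H^{\ms,\infty}(\bmu)$. Since $V_H^{\ms,\infty}(\bmu)\subset V_h$ and $u_h(\cdot;\bmu)$ solves \eqref{reference-solution}, Galerkin orthogonality gives $\beps(u_h-u^{\ms,\infty},v;\bmu)=0$ for all $v\in V_H^{\ms,\infty}(\bmu)$, and together with the $\beps$-orthogonal splitting this forces $e:=u_h-u^{\ms,\infty}\in W_h$. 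Testing the equation for $e$ with $e$ itself and using $\beps(u^{\ms,\infty},e;\bmu)=0$ (as $e\in W_h$), $\beps(u_h,e;\bmu)=(f(\cdot;\bmu),e)_{L^2(\Omega)}$ (as $e\in V_h$), (A2) and the Poincar\'e-type bound, I get $\alpha\|\nabla e\|_{L^2(\Omega)}^2\le\|f(\cdot;\bmu)\|_{L^2(\Omega)}\|e\|_{L^2(\Omega)}\lesssim H\,\|f(\cdot;\bmu)\|_{L^2(\Omega)}\|\nabla e\|_{L^2(\Omega)}$, hence $\|u_h-u^{\ms,\infty}\|_{H^1(\Omega)}\lesssim H\,\|f(\cdot;\bmu)\|_{L^2(\Omega)}$ after a Friedrichs inequality.

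The heart of the matter is the exponential decay of the localization error: there exists $0<\theta<1$, depending only on the contrast $\beta/\alpha$ and the shape regularity (hence independent of $\bmu,H,h,\eps$), such that
$$\|(Q_h-Q_{h,k})(v_H)\|_{H^1(\Omega)}\;\lesssim\;k^{d/2}\,\theta^{k}\,\|\nabla v_H\|_{L^2(\Omega)}\qquad\text{for all }v_H\in V_H.$$
This I would prove elementwise along the lines of \cite{HeM14}: using cutoff functions adapted to the patches $U_\ell(K)$ one derives a Caccioppoli/iteration estimate showing that the energy of the element corrector $Q_h^K(\Phi_z;\bmu)$ on $\Omega\setminus U_\ell(K)$ is reduced by a fixed factor strictly less than $1$ whenever $\ell$ is increased by one; the constraint that correctors lie in $\ker I_H$ is handled by subtracting the (small) term $I_H(\eta w)$ whenever a remainder function $w$ is multiplied by a cutoff $\eta$. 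Summing the resulting geometric series over the $K\subset\omega_z$ and over $z\in\mathcal{N}_H$, and using the finite overlap of the patches, produces the polynomial prefactor $k^{d/2}$. This is the only genuinely technical step, and the delicate points are (i) keeping every constant independent of $\bmu$ — which works because $\alpha,\beta$ in (A2) are uniform in $\bmu$ — and (ii) controlling the interplay between the cutoff multiplication and the $I_H$-kernel constraint.

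Finally, since $\Vmsk(\bmu)=(\mathrm{id}+Q_{h,k})(V_H)\subset V_h$ and $u_h$ solves \eqref{reference-solution}, the solution $u^{\ms}(\cdot;\bmu)$ of \eqref{classical-LOD-solution-eq} is the $\beps(\cdot,\cdot;\bmu)$-orthogonal projection of $u_h(\cdot;\bmu)$ onto $\Vmsk(\bmu)$; in particular $\|u_h-u^{\ms}\|_{\mathcal{E}(\Omega)}^{\bmu}\le\|u_h-\tilde v\|_{\mathcal{E}(\Omega)}^{\bmu}$ for every $\tilde v\in\Vmsk(\bmu)$. I would pick $\tilde v:=(\mathrm{id}+Q_{h,k})(v_H^0)$ with $v_H^0:=P_{L^2}u^{\ms,\infty}\in V_H$, which by Remark \ref{remark-on-L2-projection} is exactly the coarse part of $u^{\ms,\infty}$, i.e. $(\mathrm{id}+Q_h)(v_H^0)=u^{\ms,\infty}$, so that $u^{\ms,\infty}-\tilde v=(Q_h-Q_{h,k})(v_H^0)$. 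Combining $\|u_h-u^{\ms}\|_{\mathcal{E}(\Omega)}^{\bmu}\le\|u_h-u^{\ms,\infty}\|_{\mathcal{E}(\Omega)}^{\bmu}+\|(Q_h-Q_{h,k})(v_H^0)\|_{\mathcal{E}(\Omega)}^{\bmu}$ with the idealized bound, the decay estimate, the spectral equivalence from (A2), a Friedrichs inequality, and the stability chain $\|v_H^0\|_{H^1(\Omega)}\lesssim\|u^{\ms,\infty}\|_{H^1(\Omega)}\lesssim\|u_h\|_{H^1(\Omega)}\lesssim\|f(\cdot;\bmu)\|_{L^2(\Omega)}$, I obtain
$$\|u_h(\cdot;\bmu)-u^{\ms}(\cdot;\bmu)\|_{H^1(\Omega)}\;\lesssim\;\bigl(H+k^{d/2}\theta^{k}\bigr)\,\|f(\cdot;\bmu)\|_{L^2(\Omega)}.$$
Choosing $C_g$ large enough (depending on $\theta$, hence on $\beta/\alpha$) that $k\ge C_g|\log H|$ forces $k^{d/2}\theta^{k}\le H$, and invoking $\|f\|_{L^2(\Omega,L^\infty(\mathcal{D}))}\le C$ from (A1) to absorb the right-hand side uniformly in $\bmu$, yields the claim.
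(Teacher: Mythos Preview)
Your proposal is correct and follows precisely the approach the paper indicates: the paper does not give its own proof but simply states that ``the proof of the following proposition follows the line of \cite{HeM14}'', and your sketch is an accurate summary of that standard LOD analysis (ideal orthogonal splitting, Poincar\'e bound on $W_h$, exponential decay of the localization error via Caccioppoli-type iteration with cut-offs, and C\'ea's lemma to combine). The only point I would flag as worth an extra sentence if you write this out in full is the stability $\|v_H^0\|_{H^1(\Omega)}\lesssim\|u^{\ms,\infty}\|_{H^1(\Omega)}$, which you invoke in the final chain; this follows from the $H^1$-stability of $I_H$ together with the bounded invertibility of $I_H|_{V_H}$ on shape-regular meshes, but is not entirely automatic from $P_{L^2}$ alone.
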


Proposition \ref{prop-conv-lod} states that the LOD approach preserves the linear order of convergence (for the $H^1$-error) of the classical Finite Element Method without pre-asymptotic effects, even in case of rough coefficients $a^\eps$.

\section{Reduced Basis Decomposition}
\label{section-rb-decomposition}

Assume that the multiscale space $\Vmsk({\bmu})$ shall be assembled for an arbitrary $\bmu$ from a large set of relevant parameters $\Xi^{\train} \subset \mathcal{D}$. Depending on the size of $\Xi^{\train}$ this can be prohibitively expensive. We therefore ask the question: is it possible to only select a {\it small} subset $\Xi^{\rb} \subset \Xi^{\train}$, assemble $\Vmsk({\bmu})$ {\it only} for $\bmu \in \Xi^{\rb}$ and then reuse (or combine) these results to quickly/cheaply find an approximation of $\Vmsk({\bmu_{new}})$ for any new ${\bmu_{new}} \in \Xi^{\train} \setminus \Xi^{\rb}$.

This can be achieved by making use of the framework of the Reduced Basis (RB) method. In the following, we will elaborate the approach in detail. The goal is to construct (affine) local Reduced Basis spaces $\WzRB$ for each coarse node $z \in \mathcal{N}_H$ and, given a $\bar{\bmu} \in \Xi^{\train}$, to select one element from each of these local spaces to span a global multiscale RB space $\VmsRB(\bar{\bmu})$. Any of the following steps is either categorized as {\it offline} or {\it online}. By {\it offline step} we mean, that the step should be considered as an \quotes{one time preprocessing step}. It is significantly more expensive than an online step but need to be computed only once. This step provides a selection of representative parameters and corresponding LOD that can be used for a new parameter in the \quotes{online step}. Every step that takes place after the preprocessing is finished, is what we call an online step. Online steps are quick and efficient. They can be performed a lot of times for a lot of different parameters without involving a considerable computational complexity. 

\subsection{Initialization}
\label{subsection-initialization}
To describe the RB-LOD procedure, we start with Step \ref{step-1}, where we make an initial selection for the training set $\Xi^{\train} \subset \mathcal{D}$.

\begin{step}[Offline - Choice of a training set]\label{step-1}$\\$
In the first step, we randomly choose a finite subset $\Xi^{\train}$ of the parameter set $\mathcal{D}$. We assume that the so called {\it training set} $\Xi^{\train}\subset\mathcal{D}$ is sufficiently large so that the method is stable. Practically, $\Xi^{\train}$ can be for instance determined with the Monte-Carlo method.
\end{step}

Now that the training set is determined, the main computation involves to find (possibly small) parameter subsets $\Xi_{z}^{\rb}$ (for each coarse node $z \in \mathcal{N}_H$, this parameter set might change) for which the localized orthogonal decomposition is performed according to Definition \ref{definition:localized:ms:space}. We start with a random initial parameter choice $\bmu_1$ as described in Step \ref{step-2}. Recall that $\Phi_z \in V_H$ denotes the coarse nodal basis function belonging to the node $z \in \mathcal{N}_H$.

\begin{step}[Offline - Initialization with starting parameter]\label{step-2}
\begin{algo}
\label{algorithm-initialization}
 \rule{0.8\textwidth}{.7pt} \\
Pick randomly ${\bmu}_1 \in \Xi^{\train}$. Set $\Xi_{z}^{\rb}:=\{ {\bmu}_1 \}$ for all $z \in \mathcal{N}_H$. 
 \rule{0.8\textwidth}{.7pt} \\
Algorithm: initialize( $\Xi_{z}^{\rb}$ )
 \rule{0.8\textwidth}{.7pt} \\
In parallel \ForEach{$K \in \T_H$}
{
\ForEach{$z \in \mathcal{N}_H$ \mbox{\rm with} $z \in \overline{K}$}
 {
   compute $Q_{h,k}^{K}(\Phi_z;{\bmu}_1) \in {W}_h(U_k(K))$ \mbox{\rm via (\ref{local-corrector-problem}).}\\
   Set $\WKzRB:=\mbox{\rm span}\{ Q_{h,k}^K(\Phi_z;{\bmu}_1) \}$ (local RB space).
 }
}
\rule{0.8\textwidth}{.7pt}
\end{algo}
\end{step}

After Step \ref{step-2}, we constructed a first (trivial) global RB multiscale space $\VmsRB(\bmu)$, where
\begin{align*}
\VmsRB(\bmu):=\mbox{\rm span}\{ \Phi_{z}^{\ms}({\bmu}_1)| \hspace{2pt} z \in \mathcal{N}_H \} \qquad \mbox{with } \quad \Phi_{z}^{\ms}({\bmu}_1) := \Phi_z + \underset{K \subset \omega_z}{\sum_{K\in \T_H}} Q_{h,k}^{K}(\Phi_z;{\bmu}_1).
\end{align*}
Note that $\VmsRB(\bmu)$ as above is just a preliminary space that can be used for any parameter $\bmu$, but typically without good approximation properties. Similarily, the spaces $\WKzRB$ (for $z\in \mathcal{N}_H$ and $K \in \T_H$ and $K\subset \omega_z$) are the preliminary ($1$-dimensional) local Reduced Basis spaces. In the next step, we want to update the RB sample sets $\Xi_{z}^{\rb}$ and consequently the local RB spaces $\WKzRB$. By \quotes{update} we mean that we want to add suitable elements ${\bmu}_{z,2}$ from the training set $\Xi^{\train}$ to the local RB parameter sets $\Xi_{z}^{\rb}$. This can be achieved using a Greedy search algorithm based on a posteriori error estimation. A general result is given in the next paragraph.

\subsection{Local a posteriori error estimator}
\label{subsection-apost-error-est}

Let a parameter ${\bmu} \in \Xi^{\train}$ be fixed and let us also fix the coarse node $z \in \mathcal{N}_H$. The solution space ${W}_h(U_k(K))$ is given according to (\ref{local-kernel-space}). By $\WKzRB=\mbox{\rm span}\{ Q_{h,k}^K(\Phi_z;{\bmu}_1), \ldots, Q_{h,k}^K(\Phi_z;{\bmu}_J) \}$ we denote an arbitrary reduced basis subspace of $W_h(U_k(K))$. An orthonormal basis of $\WKzRB$ shall be denoted by
$\{ \xi_1^K, \ldots, \xi_J^K \}$ (hence $\WKzRB = \mbox{\rm span}\{ \xi_1^K, \ldots, \xi_J^K \}$). Such an orthonormalization is required to avoid ill-conditioned stiffness matrices and hence numerical instabilities when solving a local reduced problem in $\WKzRB$. Given the original basis $\{ Q_{h,k}^K(\Phi_z;{\bmu}_1), \ldots, Q_{h,k}^K(\Phi_z;{\bmu}_J) \}$, an orthonormal basis $\{ \xi_1^K, \ldots, \xi_J^K \}$ can be obtained by a Gram-Schmidt process.

Now, we consider two multiscale scale basis functions associated with the node $z \in \mathcal{N}_H$, namely
$\Phi_z^{\ms}({\bmu}):=\Phi_z + Q_{h,k}(\Phi_z;{\bmu})$, where $Q_{h,k}(\Phi_z;{\bmu})$ is computed with the strategy proposed in Definition \ref{definition:localized:ms:space} and 
\begin{align*}
\Phi_z^{\ms,\rb}({\bmu}) = \Phi_z + Q^{\rb}_{h,k}(\Phi_z;{\bmu}), \quad \mbox{where} \quad
Q^{\rb}_{h,k}(\Phi_z;{\bmu}):=\underset{K \subset \omega_z}{\sum_{K\in \T_H}} Q^{K,\rb}_{h,k}(\Phi_z;{\bmu})
\end{align*}
and $Q^{K,\rb}_{h,k}(\Phi_z;{\bmu}) \in \WKzRB$ is the solution of the following problem:
\begin{align*}
\int_{U_k(K)} \aeps(\cdot \hspace{2pt}; {\bmu} ) \nabla Q^{K,\rb}_{h,k}(\Phi_z;{\bmu}) \cdot \nabla w = - \int_K \aeps(\cdot \hspace{2pt}; {\bmu} ) \nabla \Phi_z \cdot \nabla w \qquad \mbox{for all } w \in \WKzRB.
\end{align*}
We want to state an error estimator for the error between the \quotes{optimal} basis function $\Phi_z^{\ms}({\bmu})$ and its RB approximation $\Phi_z^{\ms,\rb}({\bmu})$. This can be obtained straightforwardly by using the the Riesz representative $r^K_z({\bmu}) \in W_h( U_k(K) )$ that is given as the solution of
\begin{align*}
(\nabla r^K_z({\bmu}), \nabla w_h )_{L^2(U_k(K))} = \int_{U_k(K)} \aeps(\cdot \hspace{2pt}; {\bmu} ) \nabla Q^{K,\rb}_{h,k}(\Phi_z;{\bmu}) \cdot \nabla w_h  + \int_K \aeps(\cdot \hspace{2pt}; {\bmu} ) \nabla \Phi_z \cdot \nabla w_h
\end{align*}
for all in $w_h \in W_h(U_k(K))$. Hence, it fulfills 
$$(\nabla r^K_z({\bmu}), \nabla (Q^{K,\rb}_{h,k}(\Phi_z;{\bmu}) - Q_{h,k}^{K}(\Phi_z;{\bmu})) )_{L^2(U_k(K))}^{1/2} = \| Q_{h,k}^{K}(\Phi_z;{\bmu}) - Q^{K,\rb}_{h,k}(\Phi_z;{\bmu}) \|_{\mathcal{E}(U_k(K))}^{{\bmu}}.$$
We next define 
\begin{align}
\label{de-omega-k-node}\omega^k_z:= \cup \{ U_k(K)| \hspace{2pt} K \in \T_H, \enspace K \subset \omega_z \},
\end{align}
and observe that
\begin{align}
\nonumber
\| \Phi_z^{\ms}({\bmu}) - \Phi_z^{\ms,\rb}({\bmu}) \|_{\mathcal{E}({\omega^k_z})}^{{\bmu}}
&= \left( \int_{{\omega^k_z}} \left| \underset{K \subset \omega_z}{\sum_{K \in \T_H}} \aeps(\cdot,{\bmu})^{1/2} \nabla (Q_{h,k}^{K}(\Phi_z;{\bmu}) - Q^{K,\rb}_{h,k}(\Phi_z;{\bmu})) \right|^2 \right)^{1/2}\\
\nonumber
&\le \sqrt{C_z} \underset{K \subset \omega_z}{\sum_{K \in \T_H}} 
 \| Q_{h,k}^{K}(\Phi_z;{\bmu}) - Q^{K,\rb}_{h,k}(\Phi_z;{\bmu}) \|_{\mathcal{E}(U_k(K))}^{{\bmu}}\\
 \label{equ:energy_residual}
 &\le \sqrt{\frac{C_z}{\alpha}}  \underset{K \subset \omega_z}{\sum_{K \in \T_H}} \| \nabla r^K_z({\bmu}) \|_{L^2(U_k(K))},
\end{align} 
{where $C_z$ is a constant that only depends on the number of elements in $\omega_z^k$}.
It remains to discuss an efficient computation of the Riesz representative $r^K_z({\bmu}) \in W_h(U_k(K))$, where we exploit the affine representation of $\aeps(\cdot\hspace{2pt};{\bmu})$ (see also \cite[Section 4.4]{PaR07}). Recall $\aeps(x;{\bmu}) = \sum_{q \in \mathcal{Q}} \Theta_q({\bmu}) \aeps_q(x)$. The idea is to compute a set of  \quotes{Riesz representatives basis} that can be reused for later computations. 

Let $Q^{K,\rb}_{h,k}(\Phi_z;{\bmu}) = \sum_{j=1}^J c_j({\bmu}) \xi_j^K$, where $\{ \xi_1^K, \ldots, \xi_J^K \}$ is the orthonormal basis of $\WKzRB$ that we introduced at the beginning of this subsection. First, we compute the representatives $l^K_{q,z} \in W_h(U_k(K))$ (for $q \in \mathcal{Q}$) by
\begin{align*}
(\nabla l^K_{q,z}, \nabla w_h )_{L^2(U_k(K))} = \int_K \aeps_q \nabla \Phi_z \cdot \nabla w_h \qquad \mbox{for all } w_h \in W_h( U_k(K) )
\end{align*}
and the representatives $h^K_{q,j,z} \in W_h(U_k(K))$ (for $q \in \mathcal{Q}$ and $1 \le j \le J$) by
\begin{align*}
(\nabla h^K_{q,j,z}, \nabla w_h )_{L^2(U_k(K))} = \int_{U_k(K)} \aeps_q \nabla \xi_j^K \cdot \nabla w_h \qquad \mbox{for all } w_h \in W_h(U_k(K)).
\end{align*}
The functions $l^K_{q,z}$ and $h^K_{q,j,z}$ are hence independent of the parameter ${\bmu}$. Consequently, the parameter-dependent Riesz representative $r^K_z({\bmu})$ can be expressed as
$$
r^K_z({\bmu}) = \sum_{q \in \mathcal{Q}}  \Theta_q({\bmu}) \left( l^K_{q,z} 
+ \sum_{j=1}^J c_j({\bmu}) h^K_{q,j,z} \right).
$$
Observe that, if $l^K_{q,z}$ and $h^K_{q,j,z}$ are precomputed, the residual $r^K_z({\bmu})$ can be directly evaluated by using the formula, without solving an additional system of equations. The precomputation happens purely in the offline phase and is hence very cheap in the online phase.

\begin{notation}\label{double-notation-onb}From now on, we slightly abuse the notation and denote by $\{ Q_{h,k}^K(\Phi_z;{\bmu}_1), \ldots,$ $Q_{h,k}^K(\Phi_z;{\bmu}_J) \}$ an orthonormal basis of $\WKzRB$. We make use of this simplification to avoid an additional notation in the subsequent sections. Since an orthonormal basis can be always straightforwardly obtained from $\{ Q_{h,k}^K(\Phi_z;{\bmu}_1), \ldots, Q_{h,k}^K(\Phi_z;{\bmu}_J) \}$ (even with a canonic numbering) we can make use of this double notation without loss of generality.\end{notation}

According to the previous discussion, the next step in the approach should be to compute the parameter-independent representatives $l^K_{q,z}$ and the representative $h^K_{q,1,z}$ for the initial parameter ${\bmu}_1$ that we selected in Step \ref{step-2}. Hence, we make the following step.

\begin{step}[Offline - Compute initial Riesz representatives]
\begin{algo}
 \rule{0.95\textwidth}{.7pt} \\
Algorithm: initialRieszRepresentatives( $\{ \WKzRB| \hspace{2pt} z \in \mathcal{N}_H; \hspace{2pt} K \in \T_H, \enspace K \subset \omega_z\}$ )
 \rule{0.95\textwidth}{.7pt} \\
\ForEach{$q \in \mathcal{Q}$}
{ 
In parallel \ForEach{$z \in \mathcal{N}_H$}
{
\ForEach{$K \in \T_H$ \mbox{\rm with} $K \subset \omega_z$}
{
Compute $l^K_{q,z} \in W_h(U_k(K))$ by
\begin{align*}
(\nabla l^K_{q,z}, \nabla w_h )_{L^2(U_k(K))} = \int_K \aeps_q \nabla \Phi_z \cdot \nabla w_h \qquad \forall w_h \in W_h(U_k(K)).
\end{align*}
and $h^K_{q,1,z} \in W_h(U_k(K))$ by
\begin{align*}
(\nabla h^K_{q,1,z}, \nabla w_h )_{L^2(U_k(K))} = \int_{U_k(K)} \aeps_q \nabla Q_{h,k}^{K}(\Phi_z;{\bmu}_1) \cdot \nabla w_h \qquad \forall w_h \in W_h(U_k(K)).
\end{align*}
}}}
 \rule{0.95\textwidth}{.7pt}
\end{algo}
\end{step}

Based on precomputed representatives $l^K_{q,z}$ and $h^K_{q,j,z}$, we now introduce a corresponding local error indicator $\triangle_{z,{\bmu}}$ according to the findings from Section \ref{subsection-apost-error-est}.

\begin{definition}\label{definition-residual-error-indicator}
For $z \in \mathcal{N}_H$ and $K\in \mathcal{T}_H$ with $K \subset \omega_z$, let $\WKzRB$ be a corresponding local Reduced Basis space with basis $\{ Q_{h,k}^{K}(\Phi_z;{\bmu}_{z,j}) | \hspace{2pt} 1 \le j \le J\}$. For a given element $w({\bmu}) \in \WKzRB$ that is represented by
\begin{align*}
w({\bmu}) =  \sum_{j=1}^J c_j({\bmu}) \hspace{2pt} Q_{h,k}^{K}(\Phi_z;{\bmu}_{z,j}),
\end{align*}
we define the corresponding residual error indicator $\triangle_{z,{\bmu}}$ by
$$\triangle_{z,{\bmu}} :={\frac{\sqrt{C_z}}{\alpha}}  \underset{K \subset \omega_z}{\sum_{K \in \T_H}} \| \nabla r^K_z({\bmu}) \|_{L^2(U_k(K))},\quad 
\mbox{where}
\quad
r^K_z({\bmu}) = \sum_{q \in \mathcal{Q}}  \Theta_q({\bmu}) \left( l^K_{q,z} 
+ \sum_{j=1}^J c_j({\bmu}) h^K_{q,j,z} \right).
$$
\end{definition}

The explicit computation of $l^K_{q,z}$ and $h^K_{q,j,z}$ is described in Step \ref{step-4}.
{From equation \eqref{equ:energy_residual} together with an easy computation using Assumption (A2) we have the following upper and lower bounds for the residual error indicator that are crucial to show an apriori estimate for the Greedy algorithm
\begin{equation}
\label{equ:upper_lower}
\|\nabla \Phi_z^{\ms}({\bmu}) - \nabla\Phi_z^{\ms,\rb}({\bmu}) \|_{L^2({\omega^k_z})}\leq \triangle_{z,{\bmu}}
\leq \sqrt{C_z}\frac{\beta}{\alpha}\|\nabla \Phi_z^{\ms}({\bmu}) - \nabla\Phi_z^{\ms,\rb}({\bmu}) \|_{L^2({\omega^k_z})}.
\end{equation}
}

\subsection{Greedy search algorithm}

The next step describes a classical Greedy search procedure formulated for our setting. The idea is to start from a given parameter set $ \Xi_{z}^{\rb}$ and a corresponding local RB space $\WKzRB$. Then we solve the local problem in the RB space for {\it every} parameter in $\Xi^{\train} \setminus \Xi_{z}^{\rb}$ and use the error estimator $\triangle_{z,{\bmu}}$ to find out for which of these parameters we make the biggest error. This parameter is relevant and should be hence added to $\Xi_{z}^{\rb}$. The corresponding exact solution of the local problem (i.e the solution in $W_h(U_k(K))$) is consequently added to $\WKzRB$.
{We hence need to solve three different types of problems in Step \ref{step-4} below. The first type involves the local RB spaces $\WKzRB$ (for given node $z \in \mathcal{N}_H$ and element $K \in \T_H$ with $K \subset \omega_z$). Here, we solve for the analog of problem (\ref{local-corrector-problem}) in $\WKzRB$, i.e. we compute $Q^{K,\rb}_{h,k}(\Phi_z;{\bmu}) \in \WKzRB$ with
\begin{align}
\label{local-corrector-problem-in-rb-space}\int_{U_k(K)} \aeps(\cdot \hspace{2pt}; {\bmu} ) \nabla Q^{K,\rb}_{h,k}(\Phi_z;{\bmu}) \cdot \nabla w = - \int_K \aeps(\cdot \hspace{2pt}; {\bmu} ) \nabla \Phi_z \cdot \nabla w
\end{align}
for all $w \in \WKzRB$. Second, we need to solve for the contributions $h^K_{q,j,z}$ of the Riesz representatives, i.e. for $q\in \mathcal{Q}$, $z\in \mathcal{N}_H$, $K\in \T_H$ and a given parameter ${\bmu}_{z,j}$, find $h^K_{q,j,z} \in W_h(U_k(K))$ with
\begin{align}
\label{problem-in-greedy-h-qjz}(\nabla h^K_{q,j,z}, \nabla w_h )_{L^2(U_k(K))} = \int_{U_k(K)} \aeps_q \nabla Q_{h,k}^{K}(\Phi_z;{\bmu}_{z,j}) \cdot \nabla w_h.
\end{align}
for all $w_h \in W_h(U_k(K))$. The third type of problems involves the standard local problems (\ref{local-corrector-problem}), which need to be solved for new parameters that are added to the parameter set.}

\begin{remark}
Step \ref{step-4} below involves to solve problems in the local RB spaces $\WKzRB$. This requires the assembly of corresponding dense stiffness matrices with entries 
$$\int_{U_k(K)} \aeps(\cdot \hspace{2pt}; {\bmu} ) \nabla \xi_z({\bmu}_j) \cdot \nabla \xi_z({\bmu}_i),$$
where 
%
%
$\xi_z({\bmu}_i)=Q_{h,k}^{K}(\Phi_z;{\bmu}_{i})$
with ${\bmu}_i \in  \Xi_{z}^{\rb}$ denote the elements of an 
orthonormal 
basis of $\WKzRB$.
Here we can use the affine representation of $\aeps$ to make this procedure more efficient. After each step of the iterative procedure to update $\WKzRB$ (i.e. the step $J \mapsto J+1$), we just have to assemble and store the new entries
$$
\int_{U_k(K)} \aeps_q \nabla \xi_z({\bmu}_{J+1}) \cdot \nabla \xi_z({\bmu}_i) \qquad \mbox{for } 1 \le i \le J \enspace \mbox{and} \enspace
q \in \mathcal{Q}. 
$$
Like that, the required system matrices (for the problems in $\WKzRB$) are straightforwardly (and cheaply) obtained by summation over $q$.
\end{remark}

An analysis of the Greedy procedure is presented in \cite{BMP12}. Rates can be measured by the Kolmogorov $n$-width that describes how good a subset $F$ of a Hilbert space $X$ can be approximated by an $n$-dimensional subspace $Y_n$. In our case we have 
$$W_z := \{ Q_{h,k}(\Phi_z;\boldsymbol{\mu})| \hspace{2pt} \boldsymbol{\mu} \in \Xi^{\train} \}$$
and the corresponding Kolmogorov $n$-width in $W_h(\omega^k_z)$ is defined by
$$
d_n( W_z, W_h(\omega^k_z) ) := \inf \{ \sup_{w \in F_z} \inf_{v \in Y_{z,n}} \| v - w\|_{H^1(\omega^k_z)} | \hspace{2pt} Y_{z,n} \mbox{ is $n$-dim subspace of } W_h(\omega^k_z) \}.
$$
If we define a space $W_z^n$ as the span of the elements 
obtained by the Greedy procedure
 i.e.
$$W_{z}^n := \mbox{\rm span} \{ Q_{h,k}(\Phi_z;{\bmu}_{z,1}), \ldots, Q_{h,k}(\Phi_z;{\bmu}_{z,n}) \};$$
then {following} \cite{BMP12} it  holds  
{$$\sup_{{\psi\in {W}_z} } \inf_{w \in W_{z}^n} \| \nabla {\psi}  - \nabla w \|_{L^2(\omega^k_z)} \leq {C^{n+1}_{\alpha,\beta,C_z}}\hspace{2pt}(n+1) \hspace{2pt}\hspace{2pt}d_n( {W}_z, W_h(\omega^k_z) ).$$}
{Precisely using \eqref{equ:upper_lower} (following the lines of the proof in \cite{BMP12}) one obtains
\begin{equation}
\label{equ: cst_Kolm}
C^{n+1}_{\alpha,\beta,C_z}=\left(1+\sqrt{C_z}\frac{\beta}{\alpha}\sqrt{\frac{\beta}{\alpha}}\right)^{n+1}.
\end{equation}
}

{Thus we see from \eqref{equ: cst_Kolm} that
that the error inherited from the Kolmogorov $n$-width can be polluted by a factor of order 
$C^{n+1}_{\alpha,\beta,C_z} (n+1)$.} However, it is not possible to give a general answer about the size of $d_n( F_z, W_h(\omega^k_z) )$ itself. For smooth dependencies of $\aeps$ on the parameter ${\bmu}$, typically exponential convergence rates can be numerically observed, {leading also to an exponential convergence of the error. The usually justified assumption can be hence stated as follows: there exists some $\gamma_z>0$ and a source-term depending constant $C(\Phi_z)>0$ such that
$$d_n( F_z, W_h(\omega^k_z) ) \lesssim C(\Phi_z) e^{-\gamma_z n}.$$
Note that we can bound $C(\Phi_z)\lesssim \| \nabla \Phi_z \|_{L^2(\omega_z)}$ because $\| \nabla Q_{h,k}(\Phi_z;{\bmu}_{z,j}) \|_{L^2(\omega_z^k)} \lesssim \| \nabla \Phi_z \|_{L^2(\omega_z)}$ for all parameters ${\bmu}_{z,j}$. Combining these results, we can apply \cite[Theorem 3.1 and Corollary 4.1]{BMP12} to state the following proposition. 
\begin{proposition}\label{proposition-kolmogorov-exp-convergence}
Let $z \in \mathcal{N}_H$ and 
{$W_z:=\{ Q_{h,k}(\Phi_z;{\bmu})| \hspace{2pt} \bmu \in \Xi^{\train} \}$.}
The $J$-dimensional local RB space
obtained with the Greedy strategy is denoted by
$\WzRB:= \mbox{\rm span}\{ Q_{h,k}(\Phi_z;{\bmu}_{z,j})| \hspace{2pt} 1 \le j \le J \}$.
If the Kolmogorov $n$-width of $F_z$ in $W_h(\omega_z^k)$ satisfies
$$d_J( F_z, W_h(\omega_z^k) ) \lesssim \| \nabla \Phi_z \|_{L^2(\omega_z)} e^{-\gamma_z J}$$
{with $\gamma_z>\log (1+\sqrt{C_z}\frac{\beta}{\alpha}\sqrt{\frac{\beta}{\alpha}})$,
then  there exists $\zeta_z>0$} such that for all ${\bmu} \in \Xi^{\train}$ it holds
\begin{align}
\label{exp-convergence-in-loc-rb-space}
\inf_{w \in \WzRB} \| \nabla Q_{h,k}(\Phi_z;{\bmu})  - \nabla w \|_{L^2(\omega_z^k)} \lesssim \| \nabla \Phi_z \|_{L^2(\omega_z)} e^{-\zeta_z J}.
\end{align}
\end{proposition}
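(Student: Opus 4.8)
The plan is to derive \eqref{exp-convergence-in-loc-rb-space} as a direct consequence of the abstract Greedy convergence theory of \cite{BMP12}, applied in the Hilbert space $X = W_h(\omega_z^k)$ equipped with the $H^1$-seminorm $\|\nabla\cdot\|_{L^2(\omega_z^k)}$ (which is a norm on $W_h(\omega_z^k)$ by the kernel property of $I_H$ together with a Poincar\'e inequality), and with the compact set $F_z = W_z = \{ Q_{h,k}(\Phi_z;\bmu) \mid \bmu \in \Xi^{\train}\}$. The key observation is that the local corrector problem defining $Q^{K,\rb}_{h,k}(\Phi_z;\bmu)$ in $\WKzRB$ — and, after summing over the $K\subset\omega_z$, the problem defining $Q^{\rb}_{h,k}(\Phi_z;\bmu)$ in $\WzRB$ — is exactly a Galerkin projection with respect to the parameter-dependent inner product $\beps(\cdot,\cdot;\bmu)$, which by (A2) is equivalent to the reference inner product $(\nabla\cdot,\nabla\cdot)_{L^2(\omega_z^k)}$ with equivalence constants $\alpha,\beta$ independent of $\bmu$. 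This is precisely the situation covered by \cite{BMP12}: a Greedy algorithm steered by a residual-type surrogate (here $\triangle_{z,\bmu}$) that is equivalent to the true error, with equivalence constants controlled by \eqref{equ:upper_lower}.

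The steps I would carry out are the following. First, record that the Greedy algorithm of Step \ref{step-4} selects $\bmu_{z,1},\ldots,\bmu_{z,J}$ by maximizing $\triangle_{z,\bmu}$ over $\Xi^{\train}$, and that by \eqref{equ:upper_lower} this is a \emph{weak} greedy algorithm in the sense of \cite{BMP12} with weakness parameter $\gamma = \alpha/(\sqrt{C_z}\,\beta)\cdot(\alpha/\beta)^{1/2}$ (a fixed constant), since the quantity being maximized is within a fixed multiplicative factor of the true best-approximation residual $\|\nabla Q_{h,k}(\Phi_z;\bmu) - \nabla Q^{\rb}_{h,k}(\Phi_z;\bmu)\|_{L^2(\omega_z^k)}$. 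Second, invoke \cite[Theorem 3.1 and Corollary 4.1]{BMP12}: under exponential decay $d_n(F_z,W_h(\omega_z^k))\lesssim C(\Phi_z)e^{-\gamma_z n}$ of the Kolmogorov width, the weak-greedy approximation error decays exponentially as well, namely $\sup_{\psi\in W_z}\inf_{w\in W_z^n}\|\nabla\psi-\nabla w\|_{L^2(\omega_z^k)} \le C_{\alpha,\beta,C_z}^{n+1}(n+1)\,d_n(F_z,W_h(\omega_z^k))$ as already stated in the excerpt. Third, plug in the assumed bound $d_J(F_z,W_h(\omega_z^k))\lesssim \|\nabla\Phi_z\|_{L^2(\omega_z)}e^{-\gamma_z J}$ together with $C(\Phi_z)\lesssim\|\nabla\Phi_z\|_{L^2(\omega_z)}$, and observe that the polynomial prefactor $(J+1)$ and the geometric prefactor $C_{\alpha,\beta,C_z}^{J+1} = (1+\sqrt{C_z}\tfrac{\beta}{\alpha}\sqrt{\tfrac{\beta}{\alpha}})^{J+1}$ can be absorbed into a slightly smaller exponential rate $\zeta_z>0$ precisely because the hypothesis $\gamma_z>\log(1+\sqrt{C_z}\tfrac{\beta}{\alpha}\sqrt{\tfrac{\beta}{\alpha}})$ guarantees $\gamma_z$ strictly exceeds the logarithm of the geometric growth factor, so that $e^{-\gamma_z J}C_{\alpha,\beta,C_z}^{J+1}(J+1)\lesssim e^{-\zeta_z J}$ for $\zeta_z = \gamma_z - \log(1+\sqrt{C_z}\tfrac{\beta}{\alpha}\sqrt{\tfrac{\beta}{\alpha}}) - \epsilon$ with any small $\epsilon>0$ (the $(J+1)$ factor costing only an arbitrarily small reduction of the rate). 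Finally, since $Q_{h,k}(\Phi_z;\bmu)\in W_z = F_z$ and $\WzRB = W_z^J$, the supremum bound specializes to the claimed estimate for each individual $\bmu\in\Xi^{\train}$.

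The main obstacle — really the only non-bookkeeping point — is the verification that the error indicator $\triangle_{z,\bmu}$ driving the Greedy selection is genuinely equivalent (up to $\bmu$-independent constants) to the best-approximation error that the abstract theory of \cite{BMP12} requires the greedy step to approximately maximize. This is where \eqref{equ:upper_lower} does the work: the lower bound there shows $\triangle_{z,\bmu}$ dominates the true energy error $\|\nabla\Phi_z^{\ms}(\bmu)-\nabla\Phi_z^{\ms,\rb}(\bmu)\|_{L^2(\omega_z^k)}$, the upper bound shows it is dominated by it up to $\sqrt{C_z}\beta/\alpha$, and combining with the spectral equivalence from (A2) one passes between the $\beps(\cdot,\cdot;\bmu)$-orthogonal projection error (the genuine Galerkin best approximation, which is what appears in the greedy convergence proof) and the reference-norm best approximation $\inf_{w\in\WzRB}\|\nabla Q_{h,k}(\Phi_z;\bmu)-\nabla w\|_{L^2(\omega_z^k)}$; this passage is exactly where the extra $(\beta/\alpha)^{3/2}$-type factor in $C_{\alpha,\beta,C_z}$ and hence in the rate restriction on $\gamma_z$ originates. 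Once this equivalence is in place, the remainder is a mechanical citation of \cite{BMP12} and the elementary absorption of polynomial and geometric prefactors into the exponent, and the constant hidden in ``$\lesssim$'' in \eqref{exp-convergence-in-loc-rb-space} depends only on $\alpha$, $\beta$, $C_z$, and the implicit constants in the Kolmogorov-width hypothesis, but not on $H$, $h$, $\eps$, or $\bmu$.
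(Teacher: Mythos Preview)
Your proposal is correct and follows essentially the same approach as the paper: the paper does not give a standalone proof of this proposition but simply states that it follows by applying \cite[Theorem 3.1 and Corollary 4.1]{BMP12} to the preceding discussion, namely the upper/lower residual bounds \eqref{equ:upper_lower} (which yield the weak-greedy equivalence and the constant $C_{\alpha,\beta,C_z}$ in \eqref{equ: cst_Kolm}) together with the assumed exponential decay of the Kolmogorov width. Your write-up merely makes explicit the absorption of the prefactor $C_{\alpha,\beta,C_z}^{J+1}(J+1)$ into the exponential rate via the hypothesis $\gamma_z>\log(1+\sqrt{C_z}\tfrac{\beta}{\alpha}\sqrt{\tfrac{\beta}{\alpha}})$, which is exactly what the paper intends but leaves implicit.
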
}

\begin{remark}[Proper Orthogonal Decomposition (POD)]
As an alternative to the above described Greedy procedure, it is also possible to use the Proper Orthogonal Decomposition, also known as Karhunen-Lo\`{e}ve decomposition, to identify suitable parameter sets $\Xi_{z}^{\rb}$. The POD (cf. \cite{Jol86,Rav00,Sir87}) is based on the following question: given a space $V$, what is the optimal subspace of dimension $n\in \mathbb{N}$ so that the error of an orthogonal projection onto this space is minimized? Practically, this leads to a number of eigenvalue problems that need to be solved. The eigenvectors to the $n$ first eigenvalues span the desired subspace. An application of this strategy to the Multiscale Finite Element Method (MsFEM) can be found in \cite{Ngu08}.
\end{remark}

Observe that thanks to the exponential convergence of the Kolmogorov $n$-width (for smooth dependencies of $\aeps$ on ${\bmu}$), the dimensionality of the parameter set $\mathcal{D}$ is typically not a big problem. Due to the exponential convergence, the RB-parameter sets $\Xi^{\rb}_z$ can be expected to remain small. Hence, solving a local problem for an arbitrary parameter in the training set (that is not in $\Xi^{\rb}_z$) only involves solving a problem of very small dimension. These costs are negligible as long as the training set $\Xi^{\train}$ is of moderate size.

{\begin{step}[Offline - Greedy search]\label{step-4}
\begin{algo}
 \rule{0.9\textwidth}{.7pt} \\
Set ${\bmu}_{z,1}:={\bmu}_{1}$ for all $z \in \mathcal{N}_H$.\\
Recall $\WKzRB=\mbox{\rm span}\{ Q_{h,k}^K(\Phi_z;{\bmu}_{z,1}) \}$ 
for all $z \in \mathcal{N}_H$ and all $K \in \T_H$ with $K \subset \omega_z$.
 \rule{0.9\textwidth}{.7pt} \\
Algorithm: greedyLoop( $\{ \Xi_{z}^{\rb}| \hspace{2pt} z \in \mathcal{N}_H\}$, TOL )
 \rule{0.9\textwidth}{.7pt} \\
In parallel \ForEach{$z \in \mathcal{N}_H$}
{
\While{ $\underset{{{\bmu} \in \Xi^{\train}} }{\max} \triangle_{z,{\bmu}} > \mbox{\rm TOL}$ }{
Set $J:=|\Xi_{z}^{\rb}|$.\\
\ForEach{$K \in \T_H$ \mbox{\rm with} $K \subset \omega_z$}
 {
\ForEach{${\bmu} \in \Xi^{\train} \setminus \Xi_{z}^{\rb}$}
 {
compute $Q^{K,\rb}_{h,k}(\Phi_z;{\bmu}) \in \WKzRB$ via (\ref{local-corrector-problem-in-rb-space}).
   }
 }
\ForEach{${\bmu} \in \Xi^{\train} \setminus \Xi_{z}^{\rb}$}
{
 Compute $\triangle_{z,{\bmu}}$ via Definition \ref{definition-residual-error-indicator}.
 }
 Set ${\bmu}_{z,J+1} = \underset{{\bmu} \in \Xi^{\train} \setminus \Xi_{z}^{\rb}}{\mbox{\rm argmax}} \triangle_{z,{\bmu}}$ and set $\Xi_{z}^{\rb}:=\Xi_{z}^{\rb} \cup \{ {\bmu}_{z,J+1} \}$.\\
Update Riesz representatives.\\
\ForEach{$K \in \T_H$ \mbox{\rm with} $K \subset \omega_z$}
 {
   Compute $Q_{h,k}^{K}(\Phi_z;{\bmu}_{z,J+1}) \in {W}_h(U_k(K))$ \mbox{\rm via (\ref{local-corrector-problem}).}\\
   Set 
   $\WKzRB:= \WKzRB \oplus \mbox{\rm span}\{ Q_{h,k}^K(\Phi_z;{\bmu}_{z,J+1}) \}.$\\
   For all $q\in \mathcal{Q}$, compute $h^K_{q,J+1,z} \in W_h(U_k(K))$ via (\ref{problem-in-greedy-h-qjz}).
 }
 Set $\WzRB:=\mbox{\rm span}\{ Q_{h,k}(\Phi_z;{\bmu})| \hspace{2pt} {\bmu} \in \Xi_{z}^{\rb} \}$, where
 $$Q_{h,k}(\Phi_z;{\bmu}):=\underset{K \subset \omega_z}{\sum_{K\in \T_H}} Q_{h,k}^{K}(\Phi_z;{\bmu}).$$
}
}
 \rule{0.9\textwidth}{.7pt}
\end{algo}
\end{step}}

\begin{remark}
One might pose the question, if the the local parameter sets $\Xi^{\rb}_z$ can be chosen identically for each of the coarse nodes $z\in \mathcal{N}_H$. This is possible, but typically not efficient for the online phase. Since $\aeps(x;\bmu)$ has a possibly heterogenous dependency on $x$, a certain parameter might be crucial in an environment of a certain coarse note, but might trigger no changes in the environment of another coarse node. Hence, using a shared parameter set for all nodes, will probably lead to local RB-spaces that are larger than necessary and hence increases the computational complexity. 

However, the situation can be different if $\aeps(x;\bmu)$ yields some structural assumptions
in the $x$-dependency (such as quasi-periodicity or ergodicity). In this case we might exploit these features to assemble only one (shared) RB parameter set, based on representative computations in only one (or few) of the patches. Furthermore, in such a setting it might be even possible to map the local solutions in one patch to the solutions in another patch by a simple transformation (without recomputing it). This can indeed lead to an enormous reduction of the computational complexity. 
\end{remark}

\subsection{Precomputation of local stiffness matrix and load vector entries}
\label{subsection-precomputation-local-entries}

Recall Notation \ref{double-notation-onb}.
After Step \ref{step-4} the local RB spaces $\WzRB$ are assembled for every coarse node $z \in \mathcal{N}_H$. In the online phase, for a given parameter ${\bmu} \in \mathcal{D}$, local problems are solved in $\WzRB$.
 To make this computation efficient in the online phase, it is necessary to pre-assemble the values of the corresponding stiffness matrix and the associated right hand sides. Again, we can use the affine representation of $\aeps$. The procedure is summarized in Step \ref{step-5}.

\begin{step}[Offline - Preassembly of local stiffness matrix and load vector entries]\label{step-5}
\begin{algo}
\rule{0.9\textwidth}{.7pt} \\
We assemble the matrices $D^{z,q} \in \R^{J_z \times J_z}$, where $J_z= |\Xi_{z}^{\rb}|$.\\
We assemble the load vectors $F^{z,q} \in \R^{J_z}$, where $J_z= |\Xi_{z}^{\rb}|$.
\rule{0.9\textwidth}{.7pt} \\
 Algorithm: precomputationLocalSystemMatrices( $\{ \WzRB| \hspace{2pt} z \in \mathcal{N}_H\}$ )
\rule{0.9\textwidth}{.7pt} \\
In parallel \ForEach{$z \in \mathcal{N}_H$}
{
\For{$q \in \mathcal{Q}$}
{
\For{$j = 1, \ldots, |\Xi_{z}^{\rb}|$}
{
Assemble $F_{j}^{z,q}:=\int_{\Omega} \aeps_q \nabla \Phi_{z} \cdot \nabla Q_{h,k}(\Phi_{z};{\bmu}_{z,j})$.\\
\For{$i = j, \ldots, |\Xi_{z}^{\rb}|$}
{
Assemble $D_{ji}^{z,q}=D_{ij}^{z,q}:=\int_{\Omega} \aeps_q \nabla Q_{h,k}(\Phi_{z};{\bmu}_{z,j}) \cdot \nabla Q_{h,k}(\Phi_{z};{\bmu}_{z,i})$.
}}
}}
\rule{0.9\textwidth}{.7pt}
\end{algo}
\end{step}
Assume that we want to solve a problem in the space $\WzRB$, which is of the structure: 
\begin{align*}
\mbox{find } v^{\rb} \in \WzRB: \qquad \beps( v^{\rb}, w; \bmu ) = - \beps( \Phi_z, w; \bmu ) \qquad \forall w \in \WzRB,
\end{align*}
for some coarse nodal basis function $\Phi_z$. Then, Step \ref{step-5} allows to write down the corresponding stiffness matrix $S_z(\bmu)$ by pure summation: $S_z(\bmu) = \sum_{q \in \mathcal{Q}} \Theta_q(\bmu) D^{z,q}$. No quadrature rule is required anymore. The assembly of $S_z(\bmu)$ is thus fast in the online phase. The same holds for the load vector.  Since the dimension of $\WzRB$ is typically small, the inversion of $S_z(\bmu)$ is cheap e.g., LU type decomposition can often be used.

\subsection{Precomputation of global stiffness matrix entries}

In the previous two steps, we computed the local Reduced Basis spaces $\WzRB$ for $z\in \mathcal{N}_H$ and basically pre-assembled the corresponding stiffness matrices and load vectors. In the online phase, for a given parameter ${\bmu} \in \mathcal{D}$, we want to construct
$$\VmsRB(\bmu)=\mbox{\rm span}\{ \Phi_{z}^{\rb}({\bmu}) \hspace{2pt} z \in \mathcal{N}_H\}.$$
where  the RB multiscale basis function is given by the following problem: find $\Phi_{z}^{\rb}({\bmu}) \in \Phi_z + \WzRB$ such that 
\begin{align}
\label{local-rb-multiscale-problem} \beps(\Phi_{z}^{\rb}({\bmu}), w ; {\bmu} ) = 0 \qquad \forall w \in \WzRB.
\end{align}
Various terms of the entries of the stiffness matrix $\beps(\Phi_{z_m}^{\rb}({\bmu}), \Phi_{z_n}^{\rb}({\bmu}) , {\bmu}), \enspace \mbox{for } 1\le n,m \le N,$ can be pre-computed in the offline stage as explained below.
In this section, we hence discuss the pre-computation of certain terms that are required in the online phase. We again recall Notation \ref{double-notation-onb}.

For this purpose let $\Phi_{z_m}^{\rb}({\bmu})$ be given by
$$
\Phi_{z_m}^{\rb}({\bmu}) = \Phi_z + \sum_{{\bmu}_{z,j} \in \Xi_{z}^{\rb}} c_{z}({\bmu}) Q_{h,k}(\Phi_z;{\bmu}_{z,j}).
$$
With this, we can write the entries of the system matrix by
\begin{eqnarray}
\nonumber\lefteqn{\beps(\Phi_{z_m}^{\rb}({\bmu}), \Phi_{z_n}^{\rb}({\bmu}) , {\bmu})
= \sum_{q \in \mathcal{Q}} \Theta_q({\bmu}) \int_{\Omega} \aeps_q \nabla \Phi_{z_m} \cdot \nabla \Phi_{z_n}}\\
\label{entries-global-stiffness-matrix-1}&+& \sum_{q \in \mathcal{Q}} \sum_{{\bmu}_{z_m,j} \in \Xi_{z_m}^{\rb}}  \Theta_q({\bmu}) c_{z_m,j}({\bmu}) \int_{\Omega} \aeps_q \nabla Q_{h,k}(\Phi_{z_m};{\bmu}_{z_m,j})  \cdot \nabla \Phi_{z_n} \\
\nonumber&+& \sum_{q \in \mathcal{Q}} \sum_{{\bmu}_{z_n,i} \in \Xi_{z_n}^{\rb}} \Theta_q({\bmu}) c_{z_n,i}({\bmu})  \int_{\Omega} \aeps_q \nabla \Phi_{z_m} \cdot \nabla Q_{h,k}(\Phi_{z_n};{\bmu}_{z_n,i}) \\
\nonumber&+& \sum_{q \in \mathcal{Q}} \sum_{{\bmu}_{z_m,j} \in \Xi_{z_m}^{\rb}} \sum_{{\bmu}_{z_n,i} \in \Xi_{z_n}^{\rb}} \Theta_q({\bmu}) c_{z_m,j}({\bmu}) c_{z_n,i}({\bmu})  \int_{\Omega} \aeps_q \nabla Q_{h,k}(\Phi_{z_m};{\bmu}_{z_m,j})  \cdot \nabla Q_{h,k}(\Phi_{z_n};{\bmu}_{z_n,i}). 
\end{eqnarray}
Hence, independent of ${\bmu} \in \mathcal{D}$, we can precompute 
\begin{align}
\label{entries-global-stiffness-matrix-2}\nonumber S_{nm}^q=S_{mn}^q&:=\int_{\Omega} \aeps_q \nabla \Phi_{z_m} \cdot \nabla \Phi_{z_n};\\
M_{nm}^q(i,j) = M_{mn}^q(j,i) &:= \int_{\Omega} \aeps_q \nabla Q_{h,k}(\Phi_{z_m};{\bmu}_{z_m,j})  \cdot \nabla Q_{h,k}(\Phi_{z_n};{\bmu}_{z_n,i})\\
\nonumber\mbox{and} \hspace{40pt}
R_{nm}^q(j) &:= \int_{\Omega} \aeps_q \nabla Q_{h,k}(\Phi_{z_m};{\bmu}_{z_m,j})  \cdot \nabla \Phi_{z_n}
\end{align}
for $1 \le n,m \le N$ and for $1\le i \le |\Xi_{z_n}^{\rb}|$ and $1\le j \le |\Xi_{z_m}^{\rb}|$. Once these entries are precomputed, the global stiffness matrix entry $\beps(\Phi_{z_m}^{\rb,\ms}({\bmu}), \Phi_{z_n}^{\rb,\ms}({\bmu}) , {\bmu})$ can be again obtained via a simple summation. Note that we did not assume an affine representation of the source $f(\cdot\hspace{2pt};\bmu)$. Hence, we can typically not precomput it. However, this is no problem since the the reduced basis multiscale basis functions $\Phi_{z}^{\rb}({\bmu})$ are only locally supported. Hence, the usage of a quadrature rule for assembling the load vector of the global problem is not very costly and can be performed on-the-fly in the online phase. Furthermore, if an affine representation of $f(\cdot\hspace{2pt};\bmu)$ is available, it can be exploited and the entries of the load vector can be precomputed in the same way as for the stiffness matrix.

Using equations (\ref{entries-global-stiffness-matrix-1}) and (\ref{entries-global-stiffness-matrix-2}) it is straightforwardly possible to compute the global stiffness matrix by simple summation. We have
\begin{eqnarray}
\label{global-stiffness-matrix-entry}\lefteqn{\beps(\Phi_{z_m}^{\rb}({\bmu}), \Phi_{z_n}^{\rb}({\bmu}) , {\bmu})}\\
\nonumber&=& \sum_{q \in \mathcal{Q}} \Theta_q({\bmu}) 
\left( S_{nm}^q + \sum_{{\bmu}_{z_m,j} \in \Xi_{z_m}^{\rb}}  c_{z_m,j}({\bmu}) R_{nm}^q(j)
+ \sum_{{\bmu}_{z_n,i} \in \Xi_{z_n}^{\rb}} c_{z_n,i}({\bmu}) R_{mn}^q(i)\right.\\
\nonumber&\enspace& \quad \left. + \sum_{{\bmu}_{z_m,j} \in \Xi_{z_m}^{\rb}} \sum_{{\bmu}_{z_n,i} \in \Xi_{z_n}^{\rb}} c_{z_m,j}({\bmu}) c_{z_n,i}({\bmu}) M_{nm}^q(i,j) \right).
\end{eqnarray}

\subsection{The online phase}
\label{subsection-online-phase}

Let $\bmu \in \mathcal{D}$ be arbitrary given parameter for which we want to obtain a multiscale approximation of the original problem (\ref{equation-weak}). {This can be achieved in an efficient way using the pre-assembled quantities from Step \ref{step-1}-\ref{step-5} and equation \eqref{entries-global-stiffness-matrix-2} (i.e. the localized RB spaces $\WzRB$ and the corresponding pre-computed system matrix and load vector entries).}

{\it Local problems.} We recall that we have to compute the basis functions $\Phi_{z}^{\rb}({\bmu}) \in \Phi_z + \WzRB$ for $z=1,\ldots,N$ solutions of problem \eqref{local-rb-multiscale-problem}.
This can be done with low costs using the results from equation \eqref{entries-global-stiffness-matrix-1} and \eqref{entries-global-stiffness-matrix-2}. Recall (for $z \in \mathcal{N}_H$ and $q \in \mathcal{Q}$) the precomputed matrices $D^{z,q}$ and load vectors $F^{z,q}$ (see Step \ref{step-5}). We define $D^{z}(\bmu):=\sum_{q \in \mathcal{Q}} \Theta_q(\bmu) D^{z,q}$ and $F^{z}(\bmu):=\sum_{q \in \mathcal{Q}} \Theta_q(\bmu) F^{z,q}$. With that, solving for the solution $\Phi_{z}^{\rb,\ms}({\bmu})$ of (\ref{local-rb-multiscale-problem}) is equivalent to solving for $q_z(\bmu) \in \R^{J_z}$ with
\begin{align}
\label{rb-precomputed-local-problem}D^{z}(\bmu) q_z(\bmu) = F^{z}(\bmu) \qquad \mbox{and defining } 
\quad \Phi_z^{\rb}(\mu) := \Phi_z + \sum_{j=1}^{J_z} \hspace{2pt} (q_z(\bmu))_j \hspace{2pt} Q_{h,k}(\Phi_{z};{\bmu}_{z,j}).
\end{align}
The matrix $D^{z}(\bmu)$ is low dimensional and cheap to invert. No saddle point solver is required. The accuracy of $\Phi_z^{\rb}(\mu)$ can be checked using the error estimator $\triangle_{z,{\bmu}}$ defined in Definition \ref{definition-residual-error-indicator}.\\

\noindent{\it Global problem.} Once the RB basis functions $\Phi_z^{\rb}(\mu)$ are computed we can define
\begin{align}
\label{localized-ms-space-rb}
\VmsRB(\bmu):=\mbox{\rm span}\{ \Phi_{z}^{\rb}({\bmu})| \hspace{2pt} z \in \mathcal{N}_H\}
\end{align}
and solve for $\uRB(\cdot\hspace{2pt};\bmu) \in \VmsRB(\bmu)$ with
\begin{align}
\label{global-rb-multiscale-problem} \beps(\uRB(\cdot\hspace{2pt};\bmu), v ; {\bmu} ) = \left( f(\cdot\hspace{2pt};\bmu), v \right)_{L^2(\Omega)} \qquad \forall v \in \VmsRB(\bmu).
\end{align}
The solving can be done in an efficient way using the precomputed terms from \eqref{entries-global-stiffness-matrix-2}. We define the entries of the global stiffness matrix $S(\bmu) \in \R^{N \times N}$ by
\begin{align}
\label{global-rb-stiffness-matrix}S(\bmu)_{nm} := \beps(\Phi_{z_m}^{\rb}({\bmu}), \Phi_{z_n}^{\rb}({\bmu}) , {\bmu}), \qquad \mbox{exploiting formula (\ref{global-stiffness-matrix-entry}).}
\end{align}
The entries of the global load vector $F(\bmu) \in \R^{N}$ is given by
\begin{align}
\label{global-rb-load-vector}F(\bmu)_{n} :=  \left( f(\cdot\hspace{2pt};\bmu), \Phi_{z_n}^{\rb}({\bmu}) \right)_{L^2(\Omega)}.
\end{align}
With that we solve for ${{\boldsymbol u}}^{\rb} \in \R^{N}$
\begin{align}
\label{global-rb-multiscale-problem-algebraic} S(\bmu) \hspace{2pt} {{\boldsymbol u}}^{\rb} = F(\bmu)
\end{align}
and can set $\uRB(\cdot\hspace{2pt};\bmu):= \sum_{n=1}^N {{\boldsymbol u}}^{\rb}_n \hspace{2pt} \Phi_{z_n}^{\rb}({\bmu})$.

\begin{remark}
Even though the method is specifically designed to solve parameter-dependent multiscale problems, it can be also used to solve nonlinear and time-dependent multiscale problems. An example of how the method can be used to treat a nonlinear equation is presented in Section \ref{ss-mod-prob-2}. The treatment of time-dependent problems is similar. After introducing a suitable time discretization (e.g. by an implicit Euler scheme), the problem that needs to be solved in every time step is typically a standard elliptic problem. If we now interpret the time-dependency as an additional parameter, we obtain a parametrized elliptic multiscale problem for each time step. Hence, a time-dependent (and possibly additionally parameter dependent) equation can be seen as a large set of parametrized stationary problems. The application of the RB-LOD is obvious and each of the time steps becomes cheap.
\end{remark}

\begin{step}[Online phase]\label{online-step}
\begin{algo}
 \rule{0.95\textwidth}{.7pt} \\
 Compute a RB multiscale approximation for a given (online) parameter ${\bmu} \in \mathcal{D}$.
 \rule{0.95\textwidth}{.7pt} \\
Algorithm: getMultiscaleApproximation( $\{ \WzRB| \hspace{2pt} z \in \mathcal{N}_H\}$ )
 \rule{0.95\textwidth}{.7pt} \\
In parallel \ForEach{$z \in \mathcal{N}_H$}
{
Compute $\Phi_{z}^{\rb}({\bmu}) \in \Phi_z + \WzRB$ with
\begin{align*}
\beps( \Phi_{z}^{\rb}({\bmu}), w ; {\bmu} ) = 0 \qquad \forall w \in \WzRB
\end{align*}
using the precomputed formulation (\ref{rb-precomputed-local-problem}).
}
Set $\VmsRB(\bmu):=\mbox{\rm span}\{  \Phi_{z}^{\rb}({\bmu})| \hspace{2pt} z \in \mathcal{N}_H\}$.\\
Solve for the final RB multiscale approximation $\uRB(\cdot\hspace{2pt};\bmu) \in \VmsRB(\bmu)$ with
\begin{align*}
\beps(\uRB(\cdot\hspace{2pt};\bmu), v ; {\bmu} ) = \left( f(\cdot\hspace{2pt};\bmu), v \right)_{L^2(\Omega)} \qquad \forall v \in \VmsRB(\bmu).
\end{align*}
using the formulas (\ref{global-rb-stiffness-matrix})-(\ref{global-rb-multiscale-problem-algebraic}).
 \rule{0.95\textwidth}{.7pt}
\end{algo}
\end{step}

\subsection{A priori error analysis}
The convergence analysis for a multiscale reduced basis method usually combines an existing a priori error analysis for the parameter independent multiscale approximation with error estimates for the Greedy procedure such as stated in Proposition \ref{proposition-kolmogorov-exp-convergence} (see \cite{AbB12,AbB14,ABV14}).
The following result guarantees convergence of the method, independent of the variations in the coefficient $\aeps$.
\begin{theorem}
Assume (A1)-(A3), let $\bmu \in \Xi^{\train}$ and let the assumptions of Proposition \ref{proposition-kolmogorov-exp-convergence} be fulfilled. Furthermore we assume that there exists $J \in \mathbb{N}$ such that $J \lesssim \mbox{\rm dim} \WzRB \lesssim J$ for all nodes $z \in \mathcal{N}_H$. By $u_h(\cdot\hspace{2pt};\bmu) \in V_h$ we denote the reference solution, i.e. the solution of (\ref{reference-solution}), and by $\uRB(\cdot\hspace{2pt};\bmu) \in \VmsRB(\bmu)$ the RB-LOD approximation obtained in Step \ref{online-step}. If $k \ge C_{g} |\log(H)|$ as in Proposition \ref{prop-conv-lod} then it holds
\begin{align*}
\| \uRB(\cdot\hspace{2pt};\bmu) - u_h(\cdot\hspace{2pt};{\bmu}) \|_{H^1(\Omega)} \lesssim H + H^{-1} e^{-J \zeta} k^{d/2},
\end{align*}
where we define $0<\zeta := \min_{z \in \mathcal{N}_H} \zeta_z$ with $\zeta_z$ being the constant from Proposition \ref{proposition-kolmogorov-exp-convergence}. Obviously, if $J \ge 2 \zeta^{-1} |\log(H)|$ we preserve the linear convergence rate in $H$.
\end{theorem}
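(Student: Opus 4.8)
The plan is to split the error through the classical localized LOD solution $u^{\ms}(\cdot\hspace{2pt};\bmu) \in \Vmsk(\bmu)$ of \eqref{classical-LOD-solution-eq}, writing
$$\| \uRB(\cdot\hspace{2pt};\bmu) - u_h(\cdot\hspace{2pt};\bmu) \|_{H^1(\Omega)} \le \| \uRB(\cdot\hspace{2pt};\bmu) - u^{\ms}(\cdot\hspace{2pt};\bmu) \|_{H^1(\Omega)} + \| u^{\ms}(\cdot\hspace{2pt};\bmu) - u_h(\cdot\hspace{2pt};\bmu) \|_{H^1(\Omega)},$$
and bounding the second term by $\lesssim H$ via Proposition \ref{prop-conv-lod} (legitimate since $k \ge C_g|\log(H)|$). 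For the first term I would use that, by Galerkin orthogonality in $V_h \supset \VmsRB(\bmu)$, the function $\uRB(\cdot\hspace{2pt};\bmu)$ is the $\beps(\cdot,\cdot;\bmu)$-orthogonal projection of $u_h(\cdot\hspace{2pt};\bmu)$ onto $\VmsRB(\bmu)$; together with the norm equivalence coming from (A2) and a Poincar\'e inequality this gives
$$\| \uRB(\cdot\hspace{2pt};\bmu) - u_h(\cdot\hspace{2pt};\bmu) \|_{H^1(\Omega)} \lesssim H + \inf_{w \in \VmsRB(\bmu)} \| \nabla( u^{\ms}(\cdot\hspace{2pt};\bmu) - w) \|_{L^2(\Omega)},$$
so it suffices to exhibit one good element of $\VmsRB(\bmu)$.

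As that element I would take $w_0 := \sum_{z \in \mathcal{N}_H} c_z \, \Phi_z^{\rb}(\bmu) \in \VmsRB(\bmu)$, where the $c_z$ are the coordinates of $u^{\ms}(\cdot\hspace{2pt};\bmu) = \sum_{z} c_z \, \Phi_z^{\ms}(\bmu)$ in the multiscale basis and $\Phi_z^{\rb}(\bmu)$ is the online reduced basis function from \eqref{local-rb-multiscale-problem}. Since $\Phi_z^{\ms}(\bmu) - \Phi_z^{\rb}(\bmu)$ is supported in the patch $\omega^k_z$ of \eqref{de-omega-k-node}, and for $k \simeq |\log(H)|$ each point of $\Omega$ lies in only $\mathcal{O}(k^d)$ of these patches, a standard finite-overlap argument together with the Cauchy--Schwarz inequality gives
$$\| \nabla( u^{\ms}(\cdot\hspace{2pt};\bmu) - w_0) \|_{L^2(\Omega)}^2 \lesssim k^d \sum_{z \in \mathcal{N}_H} c_z^2 \, \| \nabla( \Phi_z^{\ms}(\bmu) - \Phi_z^{\rb}(\bmu)) \|_{L^2(\omega^k_z)}^2 .$$
Each summand is then controlled by Proposition \ref{proposition-kolmogorov-exp-convergence}: using coercivity of $\beps(\cdot,\cdot;\bmu)$ and a C\'ea estimate in the reduced space $\WzRB$ one bounds $\| \nabla( \Phi_z^{\ms}(\bmu) - \Phi_z^{\rb}(\bmu)) \|_{L^2(\omega^k_z)}$ by $\frac{\beta}{\alpha}\inf_{w \in \WzRB} \| \nabla Q_{h,k}(\Phi_z;\bmu) - \nabla w \|_{L^2(\omega^k_z)} \lesssim \| \nabla \Phi_z \|_{L^2(\omega_z)} \, e^{-\zeta_z J} \le \| \nabla \Phi_z \|_{L^2(\omega_z)} \, e^{-\zeta J}$ with $\zeta = \min_z \zeta_z$. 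Finally I would insert the scalings $\| \nabla \Phi_z \|_{L^2(\omega_z)}^2 \simeq H^{d-2}$, $N = |\mathcal{N}_H| \simeq H^{-d}$, and the coefficient bound $\sum_z c_z^2 \lesssim H^{-d} \| \sum_z c_z \Phi_z \|_{L^2(\Omega)}^2 = H^{-d} \| P_{L^2} u^{\ms}(\cdot\hspace{2pt};\bmu) \|_{L^2(\Omega)}^2 \le H^{-d} \| u^{\ms}(\cdot\hspace{2pt};\bmu) \|_{L^2(\Omega)}^2 \lesssim H^{-d}$, where the first step uses the equivalence $\|v_H\|_{L^2(\Omega)}^2 \simeq H^d \sum_z v_H(z)^2$ on $V_H$ together with the identity $\sum_z c_z \Phi_z = P_{L^2} u^{\ms}(\cdot\hspace{2pt};\bmu)$ (Remark \ref{remark-on-L2-projection}, since $Q_{h,k}(\Phi_z;\bmu) \in W_h = \ker I_H$) and the last step uses $\| u^{\ms}(\cdot\hspace{2pt};\bmu) \|_{L^2(\Omega)} \lesssim \| u_h(\cdot\hspace{2pt};\bmu) \|_{H^1(\Omega)} + H \lesssim \| f(\cdot\hspace{2pt};\bmu) \|_{L^2(\Omega)} + H \lesssim 1$ by Proposition \ref{prop-conv-lod}, stability of \eqref{reference-solution} and (A1). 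Collecting everything yields $\| \nabla( u^{\ms}(\cdot\hspace{2pt};\bmu) - w_0) \|_{L^2(\Omega)}^2 \lesssim k^d H^{-d} H^{d-2} e^{-2\zeta J} = k^d H^{-2} e^{-2\zeta J}$, and taking square roots and adding the $\mathcal{O}(H)$ contribution from Proposition \ref{prop-conv-lod} gives the claimed bound $H + H^{-1} e^{-J \zeta} k^{d/2}$.

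The step I expect to be the main obstacle is the C\'ea-type estimate inside the reduced space. The localized corrector $Q_{h,k}(\Phi_z;\bmu) = \sum_{K \subset \omega_z} Q^{K}_{h,k}(\Phi_z;\bmu)$ is a sum of solutions of decoupled element-wise saddle-point problems posed on the individual patches $U_k(K)$, so it is not the Galerkin solution of a single variational problem on $\omega^k_z$ and does not satisfy a clean $\beps(\cdot,\cdot;\bmu)$-orthogonality relation against $\WzRB$; hence C\'ea's lemma cannot be applied verbatim to pass from the Galerkin error $\| \nabla(\Phi_z^{\ms}(\bmu) - \Phi_z^{\rb}(\bmu))\|_{L^2(\omega^k_z)}$ to the best-approximation error $\inf_{w\in\WzRB}\| \nabla Q_{h,k}(\Phi_z;\bmu) - \nabla w\|_{L^2(\omega^k_z)}$. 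The honest routes around this are either (i) to bound the relevant energy error through the residual indicator $\triangle_{z,\bmu}$, whose two-sided estimate \eqref{equ:upper_lower} ties it to exactly the quantity driven below the Greedy tolerance, and then invoke Proposition \ref{proposition-kolmogorov-exp-convergence} to convert the Greedy error into the exponential bound in $J$; or (ii) to add and subtract the non-localized corrector $Q_h(\Phi_z;\bmu)$ and absorb the localization defect — exponentially small in $k$ and therefore polynomially small in $H$ for $k \simeq |\log(H)|$ — into the $\mathcal{O}(H)$ term. A secondary, purely bookkeeping point is making the overlap constant $\mathcal{O}(k^d)$ and the coefficient bound $\sum_z c_z^2 \lesssim H^{-d}$ fully explicit, since these are precisely the sources of the factors $k^{d/2}$ and $H^{-1}$ in the statement.
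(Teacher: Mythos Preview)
Your argument is essentially identical to the paper's proof: the same triangle split through $u^{\ms}(\cdot;\bmu)$, the same candidate $w_0=\sum_z c_z\,\Phi_z^{\rb}(\bmu)$, the same finite-overlap bound producing the factor $k^d$, the same use of the identity $\sum_z c_z\Phi_z=P_{L^2}u^{\ms}$ (via Remark~\ref{remark-on-L2-projection}) together with the nodal-basis norm equivalence to control $\sum_z c_z^2\|\nabla\Phi_z\|_{L^2}^2\lesssim H^{-2}$, and the same final appeal to Proposition~\ref{prop-conv-lod}. The only difference is bookkeeping: the paper writes $\sum_z c_z^2\|\nabla\Phi_z\|^2\lesssim H^{-2}\sum_z c_z^2\|\Phi_z\|^2\lesssim H^{-2}\|P_{L^2}u^{\ms}\|_{L^2}^2$ directly, whereas you factor out $H^{d-2}$ and $H^{-d}$ separately; the outcome is the same.

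On the point you flag as the main obstacle, you are in fact more careful than the paper. The paper simply invokes \eqref{exp-convergence-in-loc-rb-space} for the bound $\|Q_{h,k}(\Phi_z;\bmu)-\lambda_z^{\rb}(\bmu)\|_{H^1(\omega_z^k)}\lesssim\|\nabla\Phi_z\|_{L^2(\omega_z)}e^{-\zeta_z J}$, but \eqref{exp-convergence-in-loc-rb-space} is a best-approximation statement, and passing to the specific element $\lambda_z^{\rb}(\bmu)\in\WzRB$ needs precisely the C\'ea step you identify. Your route (ii) closes this cleanly: since $\WzRB\subset W_h$ and the non-localized corrector $Q_h(\Phi_z;\bmu)\in W_h$ satisfies $\beps(Q_h(\Phi_z;\bmu)+\Phi_z,w;\bmu)=0$ for all $w\in W_h$, C\'ea's lemma applies verbatim to give $\|Q_h(\Phi_z;\bmu)-\lambda_z^{\rb}(\bmu)\|\lesssim\inf_{w\in\WzRB}\|Q_h(\Phi_z;\bmu)-w\|$, and the two occurrences of $\|Q_h-Q_{h,k}\|$ introduced by the triangle inequality are $\lesssim e^{-ck}\|\nabla\Phi_z\|$ and hence absorbed into the $\mathcal{O}(H)$ term for $k\gtrsim|\log H|$.
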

\begin{proof}
Consider the classical LOD basis function $\Phi_{z}^{\ms}({\bmu}):=\Phi_z + Q_{h,k}(\Phi_z;{\bmu})$ and its RB-LOD version $\Phi_{z}^{\rb}({\bmu}) \in \Phi_z + \WzRB$ as in Step \ref{online-step}. We define
$
\lambda_z^{\rb}({\bmu}) := \Phi_{z}^{\rb}({\bmu}) - \Phi_z \in \WzRB
$
and obtain with (\ref{exp-convergence-in-loc-rb-space})
\begin{align}
\label{main-theorem-step-1}\| \Phi_{z}^{\ms}({\bmu}) - \Phi_{z}^{\rb}({\bmu}) \|_{H^1(\omega_z^k)} = \| Q_{h,k}(\Phi_z;{\bmu}) - \lambda_z^{\rb}({\bmu}) \|_{H^1(\omega_z^k)} \lesssim  \| \nabla \Phi_z \|_{L^2(\omega_z)} e^{-\zeta_z J}.
\end{align}
Writing the classical LOD solution $u^{\ms}(\cdot\hspace{2pt};\bmu) \in \Vmsk({\bmu})$ (see problem \eqref{classical-LOD-solution-eq}) as
$$
u^{\ms}(\cdot\hspace{2pt};\bmu) = \sum_{z \in \mathcal{N_H}} u^{\ms}_z(\bmu) \Phi_{z}^{\ms}({\bmu}),
$$
and recalling that $\WzRB$ is a subspace of the kernel of the $L^2$-projection $P_{L^2}$ (cf. Remark \ref{remark-on-L2-projection}) we have
\begin{align}
\nonumber\| \sum_{z \in \mathcal{N_H}} u^{\ms}_z(\bmu) \Phi_{z}({\bmu}) \|_{L^2(\Omega)}
&= \| \sum_{z \in \mathcal{N_H}} u^{\ms}_z(\bmu) P_{L^2}(\Phi_{z}^{\ms}({\bmu})) \|_{L^2(\Omega)}\\
\label{new-inserted-step}&= \| P_{L^2}(u^{\ms}(\cdot\hspace{2pt};\bmu)) \|_{L^2(\Omega)} \lesssim 
\| \nabla u^{\ms}(\cdot\hspace{2pt};\bmu) \|_{L^2(\Omega)},
\end{align}
where in the last step we used the $L^2$-stability of $P_{L^2}$ and the Poincar\'e-Friedrichs inequality.
With that we obtain
\begin{eqnarray*}
\lefteqn{\inf_{z \in \VmsRB(\bmu)} \| u^{\ms}(\cdot\hspace{2pt};\bmu) - z \|_{H^1(\Omega)}^2 \le
\| \sum_{z \in \mathcal{N_H}} u^{\ms}_z(\bmu) \left( \Phi_{z}^{\ms}({\bmu}) - \Phi_{z}^{\rb}({\bmu}) \right) \|_{H^1(\Omega)}^2} \\
&\lesssim& \sum_{z \in \mathcal{N_H}} k^d |u^{\ms}_z(\bmu)|^2 \|  \nabla  \left( \Phi_{z}^{\ms}({\bmu}) - \Phi_{z}^{\rb}({\bmu}) \right)\|_{L^2(\Omega)}^2
\overset{(\ref{main-theorem-step-1})}{\lesssim} e^{-2\zeta J} k^d \sum_{z \in \mathcal{N_H}} |u^{\ms}_z(\bmu)|^{2}  \| \nabla \Phi_z \|_{L^2(\omega_z)}^{2}\\
&\lesssim& e^{-2\zeta J} H^{-2} k^d \sum_{z \in \mathcal{N_H}} |u^{\ms}_z(\bmu)|^2 \|  \Phi_z \|_{L^2(\omega_z)}^2
\overset{\eqref{new-inserted-step}}{\lesssim} e^{-2\zeta J} H^{-2} k^d \| \nabla u^{\ms}(\cdot\hspace{2pt};\bmu) \|_{L^2(\Omega)}^2 \\
&\lesssim& e^{-2\zeta J} H^{-2} k^d \| f(\cdot,\bmu) \|_{L^2(\Omega)}^2,
\end{eqnarray*}
where $\| f(\cdot,\bmu) \|_{L^2(\Omega)}$ is uniformly bounded by (A2). In total, using Galerkin orthogonality and Proposition \ref{prop-conv-lod} with $k \ge C_{g} |\log(H)|$, we obtain
\begin{align*}
\| \uRB(\cdot\hspace{2pt};\bmu) - u_h(\cdot\hspace{2pt};{\bmu}) \|_{H^1(\Omega)} &\lesssim 
\inf_{z \in \VmsRB(\bmu)}  \| z - u_h(\cdot\hspace{2pt};{\bmu}) \|_{H^1(\Omega)}\\
&\lesssim \inf_{z \in \VmsRB(\bmu)}  \| z - u^{\ms}(\cdot\hspace{2pt};\bmu) \|_{H^1(\Omega)} + \| u^{\ms}(\cdot\hspace{2pt};\bmu) - u_h(\cdot\hspace{2pt};{\bmu}) \|_{H^1(\Omega)}\\
&\lesssim e^{-\zeta J} H^{-1} k^{d/2} + H.
\end{align*}
\end{proof}

\begin{remark}[General boundary conditions]
We note that the LOD (and consequently also the RB-LOD) can be generalized to any type of mixed Dirichlet and Neumann boundary conditions for problem \eqref{equation-strong}. If the boundary condition is basically described by a \quotes{coarse} function, the adaption of the method is straightforward, i.e. all local problems are still solved in the same way as before and the boundary condition is only incorporated in the weak formulation of the final global problem. In particular, we still solve in the same multiscale space $\VmsRB(\bmu)$ that was obtained for the case of a homogenous Dirichlet boundary condition. If the boundary condition is oscillatory and complicated, additional boundary correctors need to be introduced to preserve the old convergence rates in $H$. For further details we refer to \cite{HeM14} where an LOD for general boundary value problems is analyzed.
\end{remark}

\begin{remark}[Complexity]
Let us assume that $\T_H$ and $\T_h$ are quasi-uniform so that the number of coarse nodes is of order $\mathcal{O}(H^{-d})$ and the number of fine nodes of order $\mathcal{O}(h^{-d})$. In this case and for a fixed parameter $\bmu$, the cost for solving all local problems \eqref{local-corrector-problem} associated with a patch $U_k(K)$ with $K \in \mathcal{T}_H$ and $k\simeq |\log(H)|$ are $\mathcal{O}((H |\log(H)|^2 / h)^d)$ (as already discussed before). The cost for solving all local problems are consequently $\mathcal{O}((|\log(H)|^2 / h)^d)$. The global system matrix (constructed from the LOD basis functions) has $\mathcal{O}(k^d H^{-d})$ non-zero entries. This is still sparse, with entries that decay away from the diagonal. Consequently, suitable solvers can increase the computational complexity for solving the arising system to an almost linear complexity $\mathcal{O}((1/H)^d)$, which is negligible compared to the cost for solving the local problems. Hence, the computational complexity for applying the original LOD once for one single parameter are of order $\mathcal{O}((H |\log(H)|^2 / h)^d)$. 

The RB-LOD applied to a parametrized problem increases these cost by a factor that depends on the Greedy procedure. However, this only effects the offline stage. In the online phase, we have to solve $\mathcal{O}((1/H)^d)$ local problems, each of them having the dimension of order $\mathcal{O}(J)$, where $J$ denotes the dimension of the local RB-space (and where the basis of the local space is orthonormal). Hence, the cost for solving all local problems in the online phase are $\mathcal{O}(J(1/H)^d)$. For the same reasons as for the standard LOD, the global problem can be solved with a complexity which is close to $\mathcal{O}((1/H)^d)$. In summary, once the offline computations are finished, the RB-LOD has a complexity of $\mathcal{O}(J(1/H)^d)$ to solve the global parametrized problem, whereas the standard LOD has a complexity of $\mathcal{O}((|\log(H)|^2 / h)^d)$ for the same task.
\end{remark}

\section{Numerical experiments}
\label{section-numerical-experiments}

In this section we present two numerical experiments. In the first numerical experiment, we consider a parameterized linear elliptic problem as given by (\ref{equation-strong}) and demonstrate the applicability of RB-LOD. In the second numerical experiment we show how the method can be used to solve nonlinear elliptic problems such as the stationary Richards equation.

As a measure for the error we will consider the relative error norms
$\| \cdot \|_{L^2(\Omega)}^{\mbox{\tiny rel}}$, respectively $\| \cdot \|_{H^1(\Omega)}^{\mbox{\tiny rel}}$, we denote, i.e. the absolute errors divided by the associated norm of the fine scale reference solution.

For the computation times stated in Table \ref{model-problem-1-cpu-times} and \ref{model-problem-2-cpu-times}, we use the following notation.
\begin{itemize}
\item
 $t^{\mbox{\rm \scriptsize off, local}}(K)$: For a fixed coarse element $K\in \T_H$, $t^{\mbox{\rm \scriptsize off, local}}(K)$ denotes the time for solving all local problems (in the offline phase) that are associated with this element. Note that each coarse element contains 3 coarse nodes and for each coarse node the problem needs to be solved in average for $2-16$ parameters (depending on the model problem and the location) before the error falls below the tolerance. The time also contains the time for solving for the local Riesz representatives (which are required for the error estimation) and it contains the time for assembling the required system matrices and right hand sides. Note that a further parallelization is possible here.
\item $t^{\mbox{\rm \scriptsize off, local}}_{\mbox{\rm \scriptsize average}}$: The average time of $t^{\mbox{\rm \scriptsize off, local}}(K)$ over all $K \in \T_H$.
\item $t^{\mbox{\rm \scriptsize on, local}}_{\mbox{\rm \scriptsize average}}$: The average time for solving one local problem in the online phase. Note, in the online phase we only need to solve one problem for each coarse node.
\item $t^{\mbox{\rm \scriptsize on, global}}_{\mbox{\rm \scriptsize average}}$: The average time for solving the global problem (in the case of model problem 1) or the average time for solving the global problem for one iteration step (in the case of model problem 2).
\end{itemize}

\subsection{Model Problem 1}
In this section we consider the following parametrized model problem: 
find $\ueps(x;{\bmu})\in H^1_0(\Omega)$ such that
\begin{align*}
 - \nabla \cdot \left( \aeps(x;{\bmu}) \nabla \ueps(x;{\bmu}) \right) &= 1 \qquad \mbox{in } \Omega,
 \end{align*}
with
$$\aeps(x;{\bmu}) := \sum_{q =1}^4 \Theta_q({\bmu}) \aeps_q(x),$$
where 
\begin{align*}
  \Theta_1(\bmu) &:= 2 + \sin(4 \bmu ), \hspace{47pt}
  \Theta_2(\bmu) := 2 + \bmu^2 - \cos( \sqrt{|\bmu|} ), \\
  \Theta_3(\bmu) &:= 2 + \cos( \sqrt{|\bmu|} ) \quad \mbox{and} \quad
  \Theta_4(\bmu) := 1 + \sqrt{|\bmu|} + (1/10) |\bmu|^{3/2}.
\end{align*}
and $\aeps_q$ functions given by
$\aeps_1$, $\aeps_2$, $\aeps_3$ and $\aeps_4$. For a given $\eps=0.1$ we define
\begin{align*}
\aeps_1(x_1,x_2) &:= \left(
\begin{matrix}
5 \pi^{-2} \left( 4 + 2 \cos( 2 \pi x_1 / \eps ) \right)^{-1} & 0 \\
 0 & (4 \pi)^{-1} \left( 5 +  2.5 \cos( 2 \pi x_1 / \eps ) \right)
\end{matrix} \right),\\
\aeps_2(x_1,x_2) &:= \left( 10 + 9 \sin(2 \pi \sqrt{2 x_1} /\eps ) \sin( 4.5 \pi x_2^2 / \eps) \right) \left(
\begin{matrix}
1/100 & 0 \\
0 & 1/100 
\end{matrix} \right),\\
\aeps_3(x) &:= \left( (3/25) + (1/20) g^{\eps}(x) \right) \left(
\begin{matrix}
1 & 0 \\
0 & 1
\end{matrix} \right),
\end{align*}
where
\begin{align*}
g^{\eps}(x_1,x_2) := \sin\left( \lfloor x_1 + x_2 \rfloor + \lfloor \frac{x_1}{\eps} \rfloor +  \lfloor \frac{x_2}{\eps} \rfloor\right) + \cos\left( \lfloor x_2 - x_1 \rfloor +  \lfloor \frac{x_1}{\eps} \rfloor +  \lfloor \frac{x_2}{\eps} \rfloor \right),
\end{align*}
and we define the last coefficient by
\begin{align*}
\aeps_4(x)&:=(h \circ c_\varepsilon)(x)
\qquad \text{with} \enspace h(t):=\begin{cases}
t^4 &\text{for} \enspace \frac12 < t < 1 \\ 
t^{\frac{3}{2}} &\text{for} \enspace 1 < t < \frac{3}{2}  \\ 
t &\text{else}
\end{cases}
\end{align*}
and
\begin{displaymath}
c^\eps(x_1,x_2):=1 + \frac{1}{10} \sum_{j=0}^4 \sum_{i=0}^{j} \left( \frac{2}{j+1} \cos \left( \bigl\lfloor i x_2 - \tfrac{x_1}{1+i} \bigr\rfloor + \left\lfloor \tfrac{i x_1}\varepsilon \right\rfloor + \left\lfloor \tfrac{ x_2}\varepsilon \right\rfloor \right) \right).
\end{displaymath}

We set  $\mathcal{D}:=[0,5]$ and a training set consisting of $100$ randomly distributed parameters. The online parameter is chosen to be ${\bmu}=2.012$.

In all the computations the fine grid $\T_h$ is the uniformly refined grid with mesh size $h=2^{-7}$ which resolves the microstructure. We also fixed the relative RB tolerance for the offline Greedy search (i.e. Step \ref{step-4}) with the value TOL$=0.1$. This value is small enough so that we recover the classical convergence rates for the RB-LOD solution (for the selected online parameter and in the sense of Proposition \ref{prop-conv-lod}) and smaller values do not change the final errors. {The RB-LOD approximation is denoted by $\uRB$ and the reference solution obtained with a standard finite element method on the fine grid $\T_h$ is denoted by $u_h$.} The results are depicted in Table \ref{mod-prob-1-errors} and \ref{model-problem-1-table-EOCs}. More precisely, in the online phase we observe a linear convergence (with respect to the coarse mesh size $H$) for the $H^1$-error and a quadratic order convergence for the $L^2$-error. We also observe a linear order convergence of the coarse part of the {RB-LOD approximation 
denoted by $u_H^c$ and given by the $L^2$-projection of the RB-LOD approximation into the coarse space, i.e.
$u_H^c:= \sum_{z\in \mathcal{N}_H} \alpha_z^{\rb} \Phi_z$ if $\uRB = \sum_{z\in \mathcal{N}_H} \alpha_z^{\rb} \Phi_{z}^{\rb}$. In some cases, e.g. if we are only interested in an effective $L^2$-approximation of the exact solution, it might be hence sufficient to only store $u_H^c$ instead of $\uRB$ (see also \cite{EGH15}).}

The findings are emphasized by Figure \ref{mp-1-rb-lod-basis-funcs}, where the various approximations are depicted for the case $(H,k) = (2^{-3},2)$. Despite the very coarse coarse grid, we still observe that the RB-LOD approximation is hardly distinguishable from the FEM reference solution for $h=2^{-7}$. Even the coarse part alone captures all relevant features.

The number of local RB-LOD basis functions (i.e. the dimension of the space $\WzRB$ computed in the offline Step \ref{step-4}) is between $4$ and $16$ depending on the node $z$. In average $\WzRB$ contains between $7-8$ basis functions. Hence its dimension is very small. 
An example for a particular node $z\in \mathcal{N}_H$ is given in Table \ref{model-problem-1-exp-decay-in-rb-space}, where we also illustrate the decay of the RB error.
In the online phase (i.e. in Step \ref{online-step}) we use the spaces $\WzRB$ to compute {\it one} online basis function $\Phi_{z}^{\rb}({\bmu})$ for each coarse node $z\in\mathcal{N}_H$. The RB-LOD online basis function $\Phi_{z}^{\rb}({\bmu})$ is a smoothed version of the classical coarse nodal basis function $\Phi_z$. An example how it looks like is given in Figure \ref{mp-1-rb-lod-basis-funcs}.

Finally, the computational costs are depicted in Table \ref{model-problem-1-cpu-times}. As expected, the costs in the online phase are extremely low. The offline costs that arise for a single coarse element $K\in \T_H$ (see first column of Table \ref{model-problem-1-cpu-times}) might appear a bit high at first glance. However, recall that we need to solve a lot of problems on each of the coarse elements. Since we have in average $8$ relevant parameters for each coarse node, since $K$ contains $3$ coarse nodes and since we need to solve the corrector problems and the problems for the Riesz representatives, we need to solve approximately $3 \times 8 \times 2=48$ local saddle point problems in each coarse element $K$. This explains the high CPU times. However, we also note that these problems (associated with an element $K$) can be further parallelized. Hence, if enough cores are available the costs can be distributed and the CPU times for one element can be even decreased to a few seconds.

\begin{table}[h!]
\caption{\it Model Problem 1. For $h=2^{-7}$ we denote the full RB-LOD error by $\eRB:=\pRB-p_h$ and the coarse part of the RB-LOD error by $\eH:=\uRB - u_h$. By $k$ we denote the (fixed) localization according to Definition \ref{definition:localized:ms:space}. The table depicts errors for various combinations of $H$ and $k$.}
\label{mod-prob-1-errors}
\begin{center}
\begin{tabular}{|c|c|c|c|c|c|c|c|}
\hline $H$      & $k$ 
& $\| \eH \|_{L^2(\Omega)}^{\mbox{\tiny rel}}$
& $\| \eRB \|_{L^2(\Omega)}^{\mbox{\tiny rel}}$
& $\| \eRB \|_{H^1(\Omega)}^{\mbox{\tiny rel}}$ \\
\hline
\hline $2^{-2}$ & 0   & 0.20386 & 0.20355 & 0.44348 \\
\hline $2^{-2}$ & 1   & 0.11778 & 0.05939 & 0.23663 \\
\hline $2^{-2}$ & 2   & 0.11331 & 0.04106 & 0.14681 \\
\hline $2^{-2}$ & 3   & 0.11325 & 0.03373 & 0.12434 \\
\hline
\hline $2^{-3}$ & 1   & 0.14199 & 0.14207 & 0.39250 \\
\hline $2^{-3}$ & 2   & 0.02878 & 0.01076 & 0.09263 \\
\hline $2^{-3}$ & 3   & 0.02862 & 0.00869 & 0.07865 \\ 
\hline
\hline $2^{-4}$ & 1   & 0.49456 & 0.49483 & 0.69713 \\
\hline $2^{-4}$ & 2   & 0.01137 & 0.00749 & 0.09558 \\
\hline $2^{-4}$ & 3   & 0.00984 & 0.00663 & 0.07746 \\ 
\hline $2^{-4}$ & 4   & 0.00976 & 0.00172 & 0.02834 \\
\hline
\end{tabular}\end{center}
\end{table}

\begin{table}[h!]
\caption{\it Model Problem 1. As suggested by Proposition \ref{prop-conv-lod} we couple $k$ and $H$ by $k=k(H):=\lfloor |\log(H)| + 1 \rfloor$ to recover the typical convergence rates. The full RB-LOD error is given by $\eRB:=\pRB-p_h$ and the coarse part by $\eH:=\uRB - u_h$. 
For each of the errors $\|e_H\|$ below (for $H=2^{-i}$), we define the average EOC by EOC$:= \frac{1}{2} \sum_{i=1}^2 \log_2(\| e_{2^{-i}} \| / \| e_{2^{-(i+1)}} \|)/\log_2(2)$.}
\label{model-problem-1-table-EOCs}
\begin{center}
\begin{tabular}{|c|c|c|c|c|c|c|c|}
\hline $H$      & $k(H)$
& $\| \eH \|_{L^2(\Omega)}^{\mbox{\tiny rel}}$
& $\| \eRB \|_{L^2(\Omega)}^{\mbox{\tiny rel}}$
& $\| \eRB \|_{H^1(\Omega)}^{\mbox{\tiny rel}}$ \\
\hline
\hline $2^{-2}$ & 2   & 0.11331 & 0.04106 & 0.14681 \\
\hline $2^{-3}$ & 3   & 0.02862 & 0.00869 & 0.07865 \\ 
\hline $2^{-4}$ & 4   & 0.00976 & 0.00172 & 0.02834 \\
\hline
\hline \multicolumn{2}{|c|}{EOC}  & 1.769 & 2.289  & 1.187 \\
\hline
\end{tabular}\end{center}
\end{table}

\begin{table}[h!]
\caption{\it Model Problem 1. 
Let $\T_h$ denote the (uniformly refined) fine grid with $h=2^{-7}$. {The results in the table refer to a given coarse node $z=(0.03125,0.03125)\in \mathcal{N}_H$. Using the Greedy algorithm (i.e. Step \ref{step-4}) we identify the relevant parameters that are added to the local parameter set $\Xi_{z}^{\rb}$. In the first column we depict the added parameter and in the second column the corresponding maximum estimated error. In our example, 4 parameters are added before the maximum estimated error falls below the tolerance TOL$=0.1$.}}
\label{model-problem-1-exp-decay-in-rb-space}
\begin{center}
\begin{tabular}{|c|c|}
\hline added parameter & maximum estimated error \\
\hline
\hline $2.5$ & 1.47381 \\
\hline $5.0$ & 0.60277 \\
\hline $0.353535$ & 0.10932 \\
\hline $ 4.39394$  & $<$ TOL \\
\hline
\end{tabular}
\end{center}
\end{table}

\begin{figure}[h!]
\centering
\includegraphics[scale=0.18]{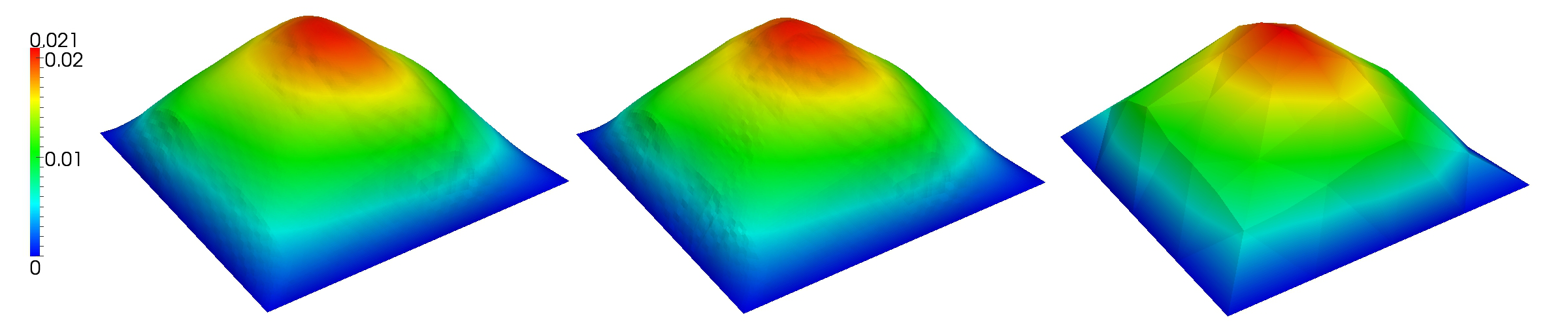}
\caption{\it Model Problem 1. Left Picture: Standard finite element approximation on $\T_h$ with $h=2^{-7}$. Middle picture: RB-LOD approximation (for the online parameter $\bmu=2.012$) on $\T_H$ with $H = 2^{-3}$ and for $(k,h)=(2,2^{-7})$. Right picture: The coarse part of the aforementioned RB-LOD approximation, i.e. its $L^2$-Projection in $V_H$.}
\label{mp-1-rb-serie-warps}
\end{figure}

\begin{figure}[h!]
\centering
\includegraphics[scale=0.22]{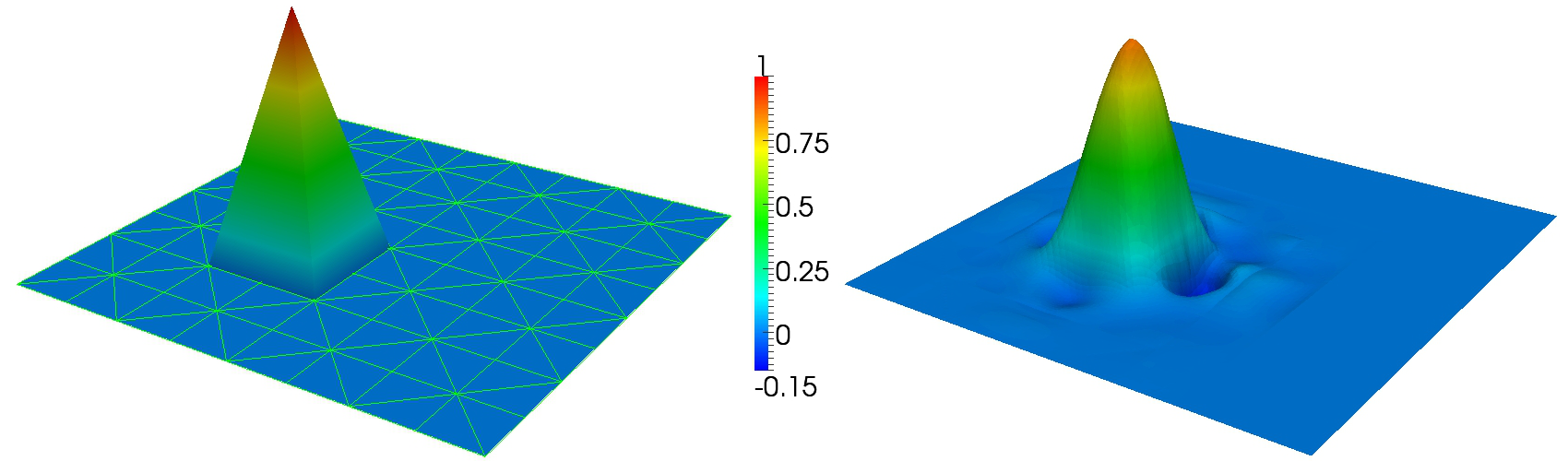}
\caption{\it Model Problem 1. Left Picture: Nodal basis function on the coarse grid $\T_H$ with $H = 2^{-3}$. Right Picture: The corresponding (online) RB-LOD basis function (on $\T_H$ with $H = 2^{-3}$) determined with the described RB-LOD method with localization parameter $k=2$ and $h=2^{-7}$. The basis function was determined for the online parameter $\bmu=2.012$.}
\label{mp-1-rb-lod-basis-funcs}
\end{figure}

\begin{table}[h!]
\caption{\it Model Problem 1. The table depicts various CPU times where we use the notation introduced at the beginning of this section.}
\label{model-problem-1-cpu-times}
\begin{center}
\begin{tabular}{|c|c|c|c|c|c|c|c|}
\hline $H$      & $k$ 
& $t^{\mbox{\rm \scriptsize off, local}}_{\mbox{\rm \scriptsize average}}$
& $t^{\mbox{\rm \scriptsize on, local}}_{\mbox{\rm \scriptsize average}}$
& $t^{\mbox{\rm \scriptsize on, global}}_{\mbox{\rm \scriptsize average}}$ \\
\hline
\hline $2^{-2}$ & 2   & 343.17 [s]  & 0.15   [s] & 0.003 [s] \\
\hline $2^{-3}$ & 2   & 119.29 [s]  & 0.132 [s] & 0.003 [s] \\
\hline $2^{-4}$ & 2   & 38.30 [s]    & 0.136 [s] & 0.004 [s] \\
\hline
\end{tabular}\end{center}
\end{table}

\subsection{Model Problem 2}
\label{ss-mod-prob-2}

In model problem 2 we consider the stationary Richards equation
given in the stationary case by
\begin{equation}
\label{richards-equation}
-\nabla \cdot \left( K^{\eps}(x) kr(x,s(x))) \nabla p(x) \right) = f(x) \qquad \mbox{in} \enspace \Omega.
\end{equation}

The Richards equation has two unknowns, the pressure $p$ and the saturation/water content $s$. It
describes the distribution of subsurface water and can be used for simulating flooding events or to predict the effects of dams or modifications of river courses. The Richards equation can be derived from the two-phase flow equations (with water as the first phase and air as the second) under the assumption that the pressure in the second phase is basically constant. Here, $K^{\eps}$ describes the absolute permeability, $kr$ the (soil-type- and saturation-dependent) relative permeability and $f$ a given source or sink term. The saturation $s$ takes values between $0$ and $1$, where $0$ means that a region is completely dry and $1$ means that the region is fully occupied by water. To remove one of the unknowns from the equation, it is possible to find a model that expresses the pressure $p$ in terms of the saturation $s$. The most popular models are according to Brooks and Corey \cite{BrC64}, Van Genuchten \cite{vGe80} and Gardner \cite{Gar58}. In the following example, we use the Brooks-Corey model which is given in the following way. Assume that the domain $\Omega$ consists of a union of subdomains $\Omega_q$ (for $q \in \mathcal{Q}$), where each of the $\Omega_q$ is occupied by a different type of soil. Then (cf. see \cite{BKS11}) for $x \in \Omega_q$, we can approximate the saturation by the (Brooks-Corey) pressure-saturation curve {$\theta_q$ with $\theta_q(p(x))=s(x)$ and} that is given by
\begin{align}
\label{water-content}\theta_q(p):= \begin{cases}
\theta_m^{(q)} + (\theta_M^{(q)} - \theta_m^{(q)}) \left( \frac{p}{p_b^{(q)}} \right)^{-\lambda^{(q)}} \quad &\mbox{for} \enspace p \le p_b^{(q)},\\
\theta_M^{(q)} \quad &\mbox{for} \enspace p \ge p_b^{(q)}.
\end{cases}
\end{align}
Here, $\theta_m^{(q)},\theta_M^{(q)} \in [0,1]$, $p_b^{(q)} <0$ and $\lambda^{(q)} >0$ are soil dependent parameters. The parameter $\theta_m^{(q)}$ denotes the residual saturation, $\theta_M^{(q)}$ the maximal saturation, $p_b^{(q)}$ the bubbling pressure and $\lambda^{(q)}$ the pore size distribution factor. The total relative permeability $kr(\theta)$ is given by
$$kr(x,\theta) := \sum_{q \in \mathcal{Q}} \chi_{\Omega_q}(x) kr_q(\theta),$$
where $\chi_{\Omega_q}$ is the indicator function of $\Omega_q$ and $kr_q(\theta)$ is given by
\begin{align}
\label{relative-permeability}kr_q(\theta) := \left( \frac{\theta - \theta_m^{(q)}}{\theta_M^{(q)} -\theta_m^{(q)}}\right)^{3+\frac{2}{\lambda^{(q)}}}, \quad \mbox{for} \enspace \theta \in [\theta_m^{(q)},\theta_M^{(q)}].
\end{align}
The (possibly rapidly varying) absolute permeability on $\Omega_q$ is given by $K^{\eps}_q$. In total, we can define $\Theta_q( p ):= (kr_q \circ \theta_q)( p )$, $\aeps(x,\bmu):=\sum_{q \in \mathcal{Q}} \chi_{\Omega_q}(x) K^{\eps}_q(x) \Theta_q( \bmu )$ and search for $p$ satisfying
\begin{align}
\label{richards-equation}-\nabla \cdot \left( \aeps(\cdot, p) \nabla p \right) = f \qquad \mbox{in} \enspace \Omega.
\end{align}

\begin{figure}[h!]
\centering
\begin{picture}(100,100)
\linethickness{1.0pt}
\put(0,0){\line(1,0){100}}
\put(0,0){\line(0,1){100}}
\put(100,0){\line(0,1){100}}
\put(0,100){\line(1,0){100}}
\linethickness{1.5pt}
\put(0,0){{\color{red}\dottedline{4}(0,60)(60,60)}}
\put(0,0){{\color{red}\dottedline{4}(60,0)(60,60)}}
\put(0,0){{\color{blue}\dottedline{4}(40,60)(100,60)}}
\put(0,0){{\color{blue}\dottedline{4}(40,0)(40,60)}}
\put(0,0){{\color{green}\dottedline{4}(0,40)(60,40)}}
\put(0,0){{\color{green}\dottedline{4}(60,40)(60,100)}}
\put(0,0){{\color{black}\dottedline{4}(40,40)(100,40)}}
\put(0,0){{\color{black}\dottedline{4}(40,40)(40,100)}}
\put(10,25){{\color{red}$\boldsymbol\Omega_1$}}
\put(5,15){{\color{red}sandy}}
\put(5,5){{\color{red}soil}}
\put(75,25){{\color{blue}$\boldsymbol\Omega_2$}}
\put(70,15){{\color{blue}sand}}
\put(10,88){{\color{dark-green}$\boldsymbol\Omega_3$}}
\put(5,78){{\color{dark-green}sandy}}
\put(5,68){{\color{dark-green}loam}}
\put(73,88){{\color{black}$\boldsymbol\Omega_4$}}
\put(68,78){{\color{black}loamy}}
\put(68,68){{\color{black}sand}}
\end{picture}
\caption{\it Illustration of the overlapping decomposition of $\Omega$ into the subdomains $\Omega_1$, $\Omega_2$, $\Omega_3$ and $\Omega_4$.}
\label{mp-2-omega-decomposed}
\end{figure}
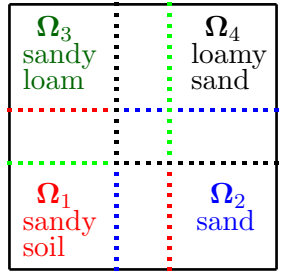

\begin{table}[h!]
\caption{\it The table depicts soil parameters for different soil types.  We let $\theta_m^{(q)}$ denote the residual- and $\theta_M^{(q)}$ the maximal saturation, furthermore $p_b^{(q)}$ denotes the bubbling pressure and $\lambda^{(q)}$ the pore size distribution factor. The values are taken from \cite{RAB93}.}
\label{table-soil-parameters}
\begin{center}
\begin{tabular}{|c||c||c|c|c|c|}
\hline $q$ & Soil type & $\theta_m^{(q)}$ & $\theta_M^{(q)}$ & $\lambda^{(q)}$ & $p_b^{(q)}$ \\
\hline
\hline 1 & sandy soil    & 0.21     & 0.95 & 1.0      & -0.1 [m] \\
\hline 2 & sand            & 0.0458 & 1.0   & 0.694  & -0.0726 [m] \\
\hline 3 & sandy loam & 0.091   & 1.0   & 0.378  & -0.147 [m] \\
\hline 4 & loamy sand & 0.08     & 1.0   & 0.553  & -0.087 [m] \\
\hline
\end{tabular}\end{center}
\end{table}

As a specific model problem realization, we consider the stationary Richards-equation (\ref{richards-equation}) with a homogeneous Dirichlet boundary condition and $f \equiv 1$ and set $\eps := 0.1$. We let $\Omega:= ]-1,1[^2$ be union of the slightly overlapping subdomains $\Omega_1,\ldots,\Omega_4$ that are given by
\begin{align*}
\Omega_1 &:= [ 0, 1/2 + \eps ] \times [ 0, 1/2 + \eps ], \quad 
\Omega_2 := [ 1/2 - \eps, 1 ] \times [ 0, 1/2 + \eps ],\\
\Omega_3 &:= [ 0, 1/2 + \eps ] \times [ 1/2 - \eps, 1 ], \quad 
\Omega_4 := [ 1/2 - \eps, 1 ] \times [ 1/2 - \eps, 1 ].
\end{align*}
The domain $\Omega_1$ is occupied by sandy soil, $\Omega_2$ by sand, $\Omega_3$ by sandy loam and $\Omega_4$ by loamy sand (see Figure \ref{mp-2-omega-decomposed}). For $q=1,\ldots,4$, we pick $K^{\eps}_q:=\aeps_q$, where $\aeps_q$ is given as introduced at the beginning of this section. The relative permeabilities $kr_q(\theta)$ and the water contents $\theta_q(p)$ are given according to the equations (\ref{relative-permeability}) and (\ref{water-content}, where the corresponding soil parameters are stated in Table \ref{table-soil-parameters}.

We aim to solve \eqref{richards-equation} with the RB-LOD. In the first step, we perform the offline preprocessing as described in Sections \ref{subsection-initialization} - \ref{subsection-precomputation-local-entries} for $\aeps(\cdot,p)$ as above. We note that the solution itself enters into the parameter set but fortunately only upper and lower bound are needed to set a parameter range of {\it possible} pressure to proceed with the RB algorithm.
The upper bound for the compact parameter set $\mathcal{D} \subset \R$ is naturally given by the maximum bubbling pressure (i.e. -0.0726 in our example). Theoretically, there exists no lower bound since the pressure $p$ can fall to $-\infty$, however practically we observe that $\theta_q(p)$ converges quickly (with order $\lambda^{(q)}$) to $\theta_m^{(q)}$ (cf. \cite{BKS11}). Hence we can use a small negative integer for the lower bound in $\mathcal{D}$. In our numerical experiments we picked $\mathcal{D}:=[-2,-0.0726]$ and a training set consisting of $100$ randomly distributed parameters in $\mathcal{D}$. Larger sets (for $\mathcal{D}$ and the training set) are possible, but not necessary in our example. The relative RB tolerance for the offline Greedy search (i.e. Step \ref{step-4}) was set to TOL$=0.01$ for our experiments. Depending on the location, the algorithm identified between $2$ and $10$ relevant parameters for each coarse node $z \in \mathcal{N}_H$, before the tolerance was reached. 

After assembling the local RB-LOD basis function sets and the corresponding entries for the local system matrix and the source term (i.e. the steps described in Section \ref{subsection-precomputation-local-entries}), we can perform the global online computation that we describe in the following. In order to solve the Richards equation we apply the Newton method. Recall that $N$ denotes the dimension of the coarse space $V_H$ (and hence it is also the dimension of the RB space $\VmsRB$). Let the initial value $\mathbf{p_0}=\left( \bf{p}_0^1, \ldots, \bf{p}_0^N \right) \in \R^N$ be given, where each entry $\bf{p}_0^z$ is associated with a coarse node $z \in \mathcal{N}_H$ ($\mathbf{z}$ denoting the index of $z$). The first online RB space $\VmsRB({\bf{p}_0})$ is assembled by using {$\bmu^0(z)=\bf{p}_0^z$} as the online parameter in Step \ref{online-step} (i.e. the online parameter is not globally fixed, but varies for each coarse node).

Assuming that the solution $\bf{p}_n \in \R^N$ of the $n$'th Newton step is computed, we assemble the corresponding global RB space $\VmsRB({\bf{p}_{n}})$ again according to Step \ref{online-step} (where ${\bf{p}_{n}^z}$ is the online parameter for node $z$), i.e.
$\VmsRB({\bf{p}_{n}}):=\mbox{\rm span}\{  \Phi_{z}^{\rb}({\bf{p}_{n}^z})| \hspace{2pt} z \in \mathcal{N}_H\}.$
With that, we define
\begin{align}
\label{formula-prbn-mod-prob-2}\pRBn := \sum_{z \in \mathcal{N}_H} {\bf{p}_{n}^z} \hspace{2pt} \Phi_{z}^{\rb}({\bf{p}_{n}^z}) \hspace{4pt} \in \VmsRB({\bf{p}_{n}})
\end{align}
as the current LOD approximation. To update $\pRBn$ we perform the classical Newton step:
find  $\deltaRBnn=\sum_{z \in \mathcal{N}_H} \boldsymbol{\delta}_{p,n} \Phi_{z}^{\rb}({\bf{p}_{n}^z}) \in \VmsRB({\bf{p}_{n}})$
such that
\begin{eqnarray}
\label{newton-step-rb}\nonumber\lefteqn{\int_{\Omega} A^{\eps}(\cdot, \pRBn) \nabla \deltaRBnn \cdot \nabla v
+ \int_{\Omega} \left( D_2 A^{\eps}(\cdot, \pRBn) \nabla \pRBn \cdot \nabla v \right) \deltaRBnn}\\
&=& \int_{\Omega} f v - \int_{\Omega} A^{\eps}(\cdot, \pRBn) \nabla \pRBn \cdot \nabla v \qquad \mbox{for all } v \in \VmsRB({\bf{p}_{n}})
\end{eqnarray}
where $D_2 A^{\eps}(\cdot, \pRBn)=\partial/\partial p A^{\eps}(\cdot, \pRBn)$.
Solving the corresponding linear system gives $\boldsymbol{\delta}_{p,n} \in \R^N$ 
and one can update the solution coefficient vector by
\begin{align}
\label{formula-prbn-mod-prob-2-eq-2}\mathbf{p}_{n+1}:=\mathbf{p}_n+ \boldsymbol{\delta}_{p,n}.
\end{align}

The Newton algorithm stops, when the norm $\boldsymbol{\delta}_{p,n}$ falls below a given tolerance. The only problem with this approach is that it requires quadrature costs in each iteration step in order to assemble the system matrix. It is not directly possible to precompute certain entries in an offline phase. However, it is possible to introduce an additional simplification. We make the following consideration.

\begin{remark}[Numerical quadrature for $A^{\eps}(\cdot, \pRBn)$ and $D_2 A^{\eps}(\cdot, \pRBn)$]
\label{remark-num-quad}We wish to simplify equation (\ref{newton-step-rb}) by approximating it with a numerical quadrature rule, since the entries of the global system matrix
cannot be  pre-computed at the moment. First, recall the definition of $\pRBn$ stated in equation (\ref{formula-prbn-mod-prob-2}). Since $\Phi_{z}-\Phi_{z}^{\rb}({\bf{p}_{n}^z})\in W_h$ we have $P_{L^2}(\Phi_{z}^{\rb}({\bf{p}_{n}^z}))=\Phi_z$ for all time steps $n$ and independent of ${\bf{p}_{n}^z}$. Recalling Remark \ref{remark-on-L2-projection}, we conclude that
\begin{align*}
\pHn := P_{L^2}(\pRBn) = \sum_{z \in \mathcal{N}_H} {\bf{p}_{n}^z} \hspace{2pt} \Phi_{z} \hspace{4pt} \in V_H
\end{align*}
and hence
$$\| \pHn - \pRBn \|_{L^2(\Omega)} \lesssim H \| \pRBn \|_{H^1(\Omega)},$$
where $\| \pRBn \|_{H^1(\Omega)}$ can typically be further bounded by data functions. Therefore we expect to make an $\mathcal{O}(H)$-error when replacing $\pRBn$ by $\pHn$ in the {\it left hand side} of equation (\ref{newton-step-rb}), i.e.
\begin{eqnarray*}
\lefteqn{\int_{\Omega} A^{\eps}(\cdot, \pRBn) \nabla \deltaRBnn \cdot \nabla v + \int_{\Omega} \left( D_2 A^{\eps}(\cdot, \pRBn) \nabla \pRBn \cdot \nabla v \right) \deltaRBnn}\\
&\approx&
\underset{=:\mbox{I}}{\underbrace{\int_{\Omega} A^{\eps}(\cdot, \pHn) \nabla \deltaRBnn \cdot \nabla v}} + \underset{=:\mbox{II}}{\underbrace{\int_{\Omega} \left( D_2 A^{\eps}(\cdot, \pHn) \nabla \pRBn \cdot \nabla v \right) \deltaRBnn}}
+  \mathcal{O}(H).
\end{eqnarray*}
Note that we are not allowed to replace $\nabla \pRBn$ by $\nabla \pHn$. 
To illustrate the simplification that this substitution yields, we consider the first term $\mbox{I}$ (the second term can be treated in a similar way). For $v=\Phi_{y}^{\rb}(\mu(y)^{n},\cdot),$ 
where $y \in \mathcal{N}_H$
by using the affine decomposition of $A^{\eps}$ we see that
\begin{eqnarray*}
\lefteqn{\int_{\Omega} A^{\eps}(x, \pHn(x)) \nabla \deltaRBnn(x) \cdot \nabla \Phi_{y}^{\rb}({\bf{p}_{n}^y},x) \hspace{2pt} dx}\\
&=& \sum_{q=1}^4 \sum_{z \in \mathcal{N}_H} \boldsymbol{\delta}_{p,n} \int_{\Omega}  \aeps_q(x) \Theta_q( \pHn(x) ) \nabla \Phi_{z}^{\rb}({\bf{p}_{n}^z},x) \cdot \nabla \Phi_{y}^{\rb}({\bf{p}_{n}^y},x) \hspace{2pt} dx\\
&\approx& \sum_{q=1}^4 \sum_{z \in \mathcal{N}_H} \boldsymbol{\delta}_{p,n} \Theta_q( {\bf{p}_{n}^z} ) \int_{\Omega}  \aeps_q(x) \nabla \Phi_{z}^{\rb}({\bf{p}_{n}^z},x) \cdot \nabla \Phi_{y}^{\rb}({\bf{p}_{n}^y},x) \hspace{2pt} dx + \mathcal{O}(H |\log(H)|).
\end{eqnarray*}
In the last step we used that the integrals are only integrals over $\mbox{supp}(\Phi_{z}^{\rb})$, which has diameter that scales like $H |\log(H)|$. Hence, replacing $\Theta_q( \pHn(x) )$ by $\Theta_q( \pHn(z) )=\Theta_q( {\bf{p}_{n}^z} )$ is expected to only lead to an $\mathcal{O}(H |\log(H)|)$-error. With these quadrature-like modifications, we can precompute an approximation of term $\mbox{I}$ similarly as described in \eqref{entries-global-stiffness-matrix-1}-\eqref{entries-global-stiffness-matrix-2}. For the term $\mbox{II}$ we can proceed the same way. In total, we can derive an approximative formulation for the left hand side in equation (\ref{newton-step-rb}) which allows for a precomputation of global system matrix entries. For the sake of a higher accuracy, we leave the right hand side of (\ref{newton-step-rb}) unaltered. This is unproblematic, since the local quadrature costs are
cheaper and only scale linearly with the number of coarse nodes.
\end{remark}

\begin{table}[h!]
\caption{\it Model Problem 2. For $h=2^{-6}$, we denote the full RB-LOD error by $\eRB:=\pRB-p_h$ and the coarse part of the RB-LOD error by $\eH:=\pH - p_h$. By $k$ we denote the (fixed) localization according to Definition \ref{definition:localized:ms:space}. The table depicts errors for various combinations of $H$ and $k$.}
\begin{center}
\begin{tabular}{|c|c|c|c|c|c|c|c|}
\hline $H$      & $k$ 
& $\| \eH \|_{L^2(\Omega)}^{\mbox{\tiny rel}}$
& $\| \eRB \|_{L^2(\Omega)}^{\mbox{\tiny rel}}$
& $\| \eRB \|_{H^1(\Omega)}^{\mbox{\tiny rel}}$ \\
\hline
\hline $2^{-2}$ & 0   & 0.2523  & 0.2518  & 0.4888 \\
\hline $2^{-2}$ & 1   & 0.1177  & 0.0604  & 0.2167 \\
\hline $2^{-2}$ & 2   & 0.1164  & 0.0654  & 0.2112 \\
\hline $2^{-2}$ & 3   & 0.1163  & 0.0596  & 0.1977 \\ 
\hline
\hline $2^{-3}$ & 1   & 0.2958  & 0.2945  & 0.4592 \\
\hline $2^{-3}$ & 2   & 0.0497  & 0.0204  & 0.1193 \\
\hline $2^{-3}$ & 3   & 0.0494  & 0.0147  & 0.0872 \\ 
\hline
\hline $2^{-4}$ & 1   & 0.5212 & 0.5212 & 0.6106 \\
\hline $2^{-4}$ & 2   & 0.0327 & 0.0253 & 0.1325 \\
\hline $2^{-4}$ & 3   & 0.0222 & 0.0047 & 0.0513 \\
\hline $2^{-4}$ & 4   & 0.0220 & 0.0032 & 0.0363 \\
\hline
\end{tabular}\end{center}
\label{mod-prob-2-errors}
\end{table}

\begin{table}[h!]
\caption{\it Model Problem 2. According to the classical result stated in Proposition \ref{prop-conv-lod} we couple $k$ and $H$ by $k=k(H):=\lfloor |\log(H)| + 0.5 \rfloor$ to recover the typical convergence rates. The full RB-LOD error is given by $\eRB:=\pRB-p_h$ and the coarse part by $\eH:=\pH - p_h$. For each of the errors $\|e_H\|$ below (for $H=2^{-i}$), we define the average EOC by EOC$:= \frac{1}{2} \sum_{i=1}^2 \log_2(\| e_{2^{-i}} \| / \| e_{2^{-(i+1)}} \|)/\log_2(2)$.}
\label{model-problem-2-table-EOCs}
\begin{center}
\begin{tabular}{|c|c|c|c|c|c|c|c|}
\hline $H$      & $k(H)$
& $\| \eH \|_{L^2(\Omega)}^{\mbox{\tiny rel}}$
& $\| \eRB \|_{L^2(\Omega)}^{\mbox{\tiny rel}}$
& $\| \eRB \|_{H^1(\Omega)}^{\mbox{\tiny rel}}$ \\
\hline
\hline $2^{-2}$ & 1   & 0.1177  & 0.0604  & 0.2167 \\
\hline $2^{-3}$ & 2   & 0.0497  & 0.0204  & 0.1193 \\
\hline $2^{-4}$ & 3   & 0.0222  & 0.0047  & 0.0513 \\
\hline
\hline \multicolumn{2}{|c|}{EOC}  & 1.203 & 1.842  & 1.039 \\
\hline
\end{tabular}\end{center}
\end{table}

The subsequent results are obtained for the case {\it without} a numerical quadrature
in the sense of Remark \ref{remark-num-quad}. The method that we used is directly based on the non-modified equations (\ref{formula-prbn-mod-prob-2})-(\ref{formula-prbn-mod-prob-2-eq-2}). However, we note that we implemented both versions of the method and the obtained results were basically the same up to small relative errors of order $10^{-4}$ or less. The relative tolerance for the Newton algorithm to abort was set to $10^{-5}$ (which is sufficiently small to not influence the order of accuracy of the method). For large enough iteration steps $n$ (so that the Newton algorithm aborts) we denote $\pRB:=\pRBn$ and $\pH:=\pHn$. The reference solution (i.e. the solution in the full fine scale finite element space $V_h$) is denoted by $p_h$. The corresponding errors are depicted in Table \ref{mod-prob-2-errors}.
We see that the RB strategy still preserves the behavior of the classical method. We get a fast decay  in terms of the localization parameter $k$ and if we couple $H$ and $k$ according to Proposition \ref{prop-conv-lod} we also recover the classical convergence rates (see Table \ref{model-problem-2-table-EOCs}).

\begin{table}[h!]
\caption{\it Model Problem 2. The table depicts various CPU times.}
\label{model-problem-2-cpu-times}
\begin{center}
\begin{tabular}{|c|c|c|c|c|c|c|c|}
\hline $H$      & $k$ 
& $t^{\mbox{\rm \scriptsize off, local}}_{\mbox{\rm \scriptsize average}}$
& $t^{\mbox{\rm \scriptsize on, local}}_{\mbox{\rm \scriptsize average}}$
& $t^{\mbox{\rm \scriptsize on, global}}_{\mbox{\rm \scriptsize average}}$ \\
\hline
\hline $2^{-2}$ & 2   & 78.15 [s]  & 0.036 [s]  & 0.003 [s] \\
\hline $2^{-3}$ & 2   & 19.12 [s]  & 0.039 [s]  & 0.003 [s] \\
\hline $2^{-4}$ & 2   &   4.71 [s]  & 0.022 [s]  & 0.004 [s] \\
\hline
\end{tabular}\end{center}
\end{table}

Table \ref{model-problem-2-cpu-times} shows that the main computational costs for solving the local problems takes place in the offline phase. Once the the local problems are solved, 
{the computation of the {RB-LOD basis functions} in the online phase (for a given iteration step or a new source term) is very fast.}
The time for computing an online RB-LOD basis function for a given coarse node is of order $0.03$ seconds and hence very small. {The term $t^{\mbox{\rm \scriptsize on, local}}_{\mbox{\rm \scriptsize average}}$ in Table \ref{model-problem-2-cpu-times} denotes the time that is required for solving one local problem in the online phase where the average is taken over all nodes and all iteration steps.} The offline time is much higher (depending on the resolution of the coarse mesh) but can be further parallelized depending on the number of available CPUs. The more CPUs that we can use for distributing the computations, the cheaper the method.

$\\$
{{\bf Conclusion.} In this work we proposed a new method for tackling parametrized and nonlinear multiscale problems in an efficient way. We combined the LOD 
{method with a model reduction strategy to construct localized reduced basis functions.}
After an offline preprocessing step, these locally supported basis functions can be computed with very low computational costs for any new parameter. The space that is spanned by the functions is low dimensional and yields high approximation properties. The applicability of the RB-LOD was demonstrated in numerical experiments for linear and nonlinear problems.}

$\\$
{{\bf Acknowledgment.} This work was supported in part by the Swiss National Science Foundation under Grant 200021 134716. We also thank the anonymous referees for their helpful feedback which improved this article.

\def\cprime{$'$}

\end{document}